\documentclass[10pt, a4paper, twoside,reqno]{amsart}

\addtolength{\voffset}{0cm} 
\addtolength{\textheight}{1cm} 
\addtolength{\hoffset}{-2cm}
\addtolength{\textwidth}{4cm}

\setlength{\parskip}{2mm}
\linespread{1.2}
\usepackage{verbatim}

\usepackage{fancyhdr}
\usepackage{mathabx}


\makeatletter

\def\section{\@startsection{section}{1}%
	\z@{.7\linespacing\@plus\linespacing}{.5\linespacing}%
	{\normalfont \Large\scshape\centering}}

\def\subsection{\@startsection{subsection}{2}%
	\z@{.5\linespacing\@plus.7\linespacing}{.5\linespacing}%
	{\normalfont\large\bfseries}}

\def\subsubsection{\@startsection{subsubsection}{3}%
	\z@{.5\linespacing\@plus.7\linespacing}{.5\linespacing}%
	{\normalfont\itshape}}




\usepackage[usenames, dvipsnames]{color}
\definecolor{darkblue}{rgb}{0.0, 0.0, 0.45}

\usepackage[colorlinks	= true,
raiselinks	= true,
linkcolor	= darkblue, 
citecolor	= Mahogany,
urlcolor	= ForestGreen,
pdfauthor	= {Georgios Darivianakis},
pdftitle	= {},
pdfkeywords	= {},
pdfsubject	= {},
plainpages	= false]{hyperref}

\allowdisplaybreaks
\date{\today}


\usepackage{graphicx, psfrag}
\usepackage{amsthm}
\usepackage{amsmath} 
\usepackage{amssymb}  
\usepackage[noadjust]{cite}
\usepackage{color}
\usepackage{enumerate}
\usepackage{subfigure}
\usepackage{multirow}
\usepackage{booktabs}
\usepackage{subfigure}
\graphicspath{{drawing/}}

\usepackage{tikz}
\usetikzlibrary{calc,patterns,decorations.pathmorphing,decorations.markings}

\usepackage{xspace}

\newtheorem{definition}{Definition}
\newtheorem{proposition}{Proposition}

\newtheorem{theorem}{Theorem}

\usepackage{tikz}
\usetikzlibrary{calc,patterns,decorations.pathmorphing,decorations.markings}

\newcommand{\mb}{\mathbb}
\newcommand{\mc}{\mathcal}

\newcommand{\st}{\textrm{s.t.}}

\newcommand{\Ni}{{\scriptscriptstyle \mathcal N_i \scriptstyle}}
\newcommand{\Ti}{{\scriptscriptstyle T \scriptstyle}}
\newcommand{\Fi}{{\scriptscriptstyle f \scriptstyle}}

\newcommand{\wh}[1]{\widehat{#1}}
\newcommand{\wt}[1]{\widetilde{#1}}

\graphicspath{{./figs/}}


\title[Distributed Model Predictive Control for Linear Systems with Adaptive Terminal Sets]{Distributed Model Predictive Control for Linear Systems with Adaptive Terminal Sets}

\author{G. Darivianakis, A. Eichler and J. Lygeros
}%
\thanks{This research project is financially supported by the Swiss Innovation Agency Innosuisse and is part of the Swiss Competence Center for Energy Research SCCER FEEB\&D} 
\thanks{The authors are with the Automatic Control Laboratory, Department of Electrical Engineering and Information Technology, ETH Zurich, 8092 Zurich, Switzerland. (e-mail: gdarivia@control.ee.ethz.ch; eichlean@control.ee.ethz.ch; jlygeros@control.ee.ethz.ch).}

\pagestyle{fancy}
\fancyhf{}      
\fancyhead[LE,RO]{\footnotesize\thepage}
\fancyhead[LO,RE]{\footnotesize \textbf{G. Darivianakis, A. Eichler and J. Lygeros:} \textit{Distributed MPC for Linear Systems with Adaptive Terminal Sets}\\ Article submitted to ...}

\begin{document} 

\begin{abstract}
	In this paper, we propose a distributed model predictive control (DMPC) scheme for linear time-invariant constrained systems which admit a separable structure. To exploit the merits of distributed computation algorithms, the stabilizing terminal controller, value function and invariant terminal set of the DMPC optimization problem need to respect the loosely coupled structure of the system. Although existing methods in the literature address this task, they typically decouple the synthesis of terminal controllers and value functions from the one of terminal sets. In addition, these approaches do not explicitly consider the effect of the current state of the system in the synthesis process. These limitations can lead the resulting DMPC scheme to poor performance since it may admit small or even empty terminal sets. Unlike other approaches, this paper presents a unified framework to encapsulate the synthesis of both the stabilizing terminal controller and invariant terminal set into the DMPC formulation. Conditions for Lyapunov stability and invariance are imposed in the synthesis problem in a way that allows the value function and invariant terminal set to admit the desired distributed structure. We illustrate the effectiveness of the proposed method on several examples including a benchmark spring-mass-damper problem.	
\end{abstract}

\maketitle

\section{Introduction}

Operation of large-scale networks of interacting dynamical systems remains an active field of research due to its high impact on real-world applications, e.g., regulation of power networks \cite{Venkat2008} and energy management of building districts \cite{Darivianakis2016}. For a system of this scale, the design and deployment of a centralized controller to regulate its operation is often a difficult task due to computation and communication limitations in the network. In such cases, it is desirable to design interacting local controllers with a prescribed structure which rely only on local information and computational resources. Even though the problem of synthesizing optimal distributed controllers is known NP-hard \cite{Tsitsiklis1985} in its general form, for certain network structures it has been shown to admit either a closed-form solution \cite{Fardad2009} or an exact convex reformulation \cite{Rotkowitz2006}. For general network structures, the usual practice is to resort to a linear matrix inequality (LMI) relaxations \cite{Langbort2004,Zecevic2010,Eichler2014} or semidefinite programming (SDP) relaxations \cite{Lavaei2012,Fazelnia2017} to obtain suboptimal distributed controllers with performance guarantees.

A downside of these static distributed controllers is their inability to efficiently cope with state and input constraints of the systems. Model predictive control (MPC) is an optimization based methodology that is well-suited for constrained linear systems \cite{Mayne2000}. Despite recent advances on computation and communication technologies, formulating and solving a large optimization problem, within the existing time limitations, remains a challenging task. To circumvent this, several methods have been proposed in the literature to leverage the distributed structure of the network in order to approximate the original optimization problem through a set of loosely coupled subproblems. DMPC approaches are typically categorized into non-cooperative \cite{Mayne2005,Richards2007,Trodden2010,Farina2012,Riverso2014,Lucia2015,Trodden2017} and cooperative ones \cite{Keviczky2004,Venkat2005,Stewart2010,Maestre2011,Giselsson2014,Conte2012,DarivianakisACC}. In the former, each system considers the effect of neighboring systems as a disturbance in its own dynamics and constraints. Exemplary cases of non-cooperative DMPC approaches are tube-based methods where the states and inputs of neighboring subsystems are confined in a precomputed \cite{Mayne2005,Richards2007,Trodden2010} or adaptive \cite{Farina2012,Riverso2014,Lucia2015,Trodden2017} bounded set. In this setting, each subsystem needs to account for all possible impacts of its neighbors occurring within these bounded sets. Though computationally simple and effective in practice, non-cooperative approaches can be conservative in presence of strong coupling. On the other hand, cooperative distributed MPC approaches require substantial communication infrastructure and computation resources since a system-wide MPC problem is formulated and solved. Approaches discussed in the literature \cite{Keviczky2004,Venkat2005,Stewart2010,Maestre2011,Giselsson2014} typically involve the communication of planned control sequences or state trajectories between neighboring systems. Unlike the conservative non-cooperative methods, cooperative approaches can guarantee convergence to the optimal solution of the original centralized optimization problem.

In the MPC scheme, the existence of a stabilizing static terminal controller is needed to guarantee recursive feasibility and stability of the closed-loop system. This terminal controller respects the state and input constraints of the system when operated in an invariant terminal set. The infinite-horizon cost associated with this terminal controller is upper bounded by a value function \cite{Mayne2000}. In the DMPC framework adopted here, the terminal controller, value function and invariant terminal set are designed as to respect the existing distributed structure of the system \cite{Mayne2005,Richards2007,Trodden2010,Farina2012,Riverso2014,Lucia2015,Trodden2017,Keviczky2004,Venkat2005,Stewart2010,Maestre2011,Giselsson2014,Conte2012,DarivianakisACC}. This way, the resulting DMPC optimization problem admits the desired distributed structure that makes it amendable to distributed computation algorithms such as the alternating direction method of multipliers (ADDM) \cite{Boyd2011}. To achieve this, current approaches in the literature typically split the design phase into two sequential parts: $ (i) $ the terminal controller and value function are synthesized based on Lyapunov stability concepts, then $ (ii) $ the invariant terminal set is constructed as the closed-loop system under the given terminal controller to satisfy the state and input constraints of the system. However, the resulting invariant terminal set can be a small (or even empty) inner approximation of the maximum invariant terminal set due to the imposed restrictions on its structure and the decoupled design phases. This can lead to severe performance degradation of the resulting DMPC scheme.

In this paper, we propose a novel approach which allows us to encapsulate the design of a distributed stabilizing terminal controller and invariant terminal set in the DMPC formulation such that these can be adapted in every iteration given the current state of the system. The necessity of online adaptation of terminal sets based on the predicted system evolution has previously been identified on several works (e.g., \cite{Conte2012, Simon2014, Lucia2015, Trodden2017}). The key difference of our approach is that the design of the stabilizing terminal controller and the invariant terminal set as well as the derivation of the optimal input for the DMPC problem are the result of one single optimization problem. This is beneficial since the size of the invariant terminal sets is now determined together with the predicted system evolution and explicitly depends on the current state of the system. This way the conservativeness introduced by imposing a decentralized structure on the invariant terminal sets is reduced by allowing flexibility on the shape of these sets. For the the design of decoupled terminal invariant sets the mutual dependencies of the neighboring systems are treated as a bounded disturbance. We employ robust optimization tools to express the Lyapunov stability and invariance conditions explicitly on the DMPC optimization problem in the form of LMIs. These LMIs are formulated as to respect the existing coupling structure of the system. Although mutual dependencies are treated as disturbances, the proposed method falls in the category of cooperative schemes since the sizes of the invariant terminal sets as well as the input trajectories for the finite-time MPC horizon are optimization variables that need to be agreed among all the involved systems in the network.

The rest of the paper is organized as follows. In Section \ref{sec::probForm} the dynamical system is analyzed and the standard DMPC scheme is briefly reviewed. The main contributions are presented in Section \ref{sec::design}, where the methods to encapsulate the design of the distributed stabilizing terminal controller, value function and invariant terminal set, based on Lyapunov stability and invariance conditions, in the DMPC problem formulation are discussed. Section \ref{sec::numericals} provides numerical studies to assess the efficacy and scalability of the proposed method. Concluding remarks are provided in Section \ref{sec::conclusion}.

{\bf Notation:} Let $\mathbb{R}$, $ \mb R_+ $ and $\mathbb{N}_+ $ denote the set of real numbers, non-negative real numbers and non-negative integers, respectively. For a vector $ v \in \mb R^n $, we denote by $v^\top$ its transpose and $ \| v \| $ its Euclidean norm. For given vectors $ v_{i} \in \mb R^{k_i} $ with $ k_i \in \mb N $, $ i \in \mc M = \{1,\ldots,m\} $, we define $ [v_{i}]_{i\in \mc M} = [v_{1}^\top \ldots v_{m}^\top]^\top \in \mb R^{k} $ with $ k = \sum_{i=1}^{m}k_i $ as their vector concatenation, and $ \text{diag}(v_1,\ldots,v_M) $ as the block diagonal matrix with $ v_1, \ldots ,v_M $ on the diagonal and zeros elsewhere. The notation $W\succeq 0$ is used to show that a symmetric matrix $W$ is positive semidefinite. A function $ f: \mb R_+ \rightarrow \mb R_+ $ belongs to class $ \mc K $ if it is continuous, strictly increasing and $ f(0) = 0 $. A function $ f: \mb R_+ \rightarrow \mb R_+ $ belongs to class $ \mc K_\infty $ if $ f \in \mc K $ and $ \lim_{x \rightarrow \infty} f(x) = \infty $.


\section{Problem formulation}\label{sec::probForm}

\subsection{Dynamically coupled constrained linear systems}
\begin{subequations}\label{eq::system}
	Consider a discrete-time linear time-invariant system with state dynamics at time $ t \in \mb N_+ $ given as
	\begin{equation}
	x_{t+1} = A x_t + B u_t.
	\end{equation}
	Here, $ x_t \in \mb R^n $ denotes the states with $ x_0 $ known and $ u_t \in \mb R^m $ the control inputs. The system matrices $ A \in \mb R^{n \times n} $, $ B \in \mb R^{n \times m} $ are known with $ (A, B) $ to form a controllable pair. The states and inputs of the system are subject to linear constraints
	\begin{align}
	& x_t \in \mc X = \{ x \in \mb R^{n} \; : \; G x \le g \},\\
	& u_t \in \mc U = \{ u \in \mb R^{m} \; : \; H u \le h \},
	\end{align}
	with known matrices $ G \in \mb R^{p \times n} $, $ g \in \mb R^p $, $ H \in \mb R^{k \times m} $ and $ h \in \mb R^m $. To simplify exposition, we assume that the sets $ \mc X $ and $ \mc U $ contain the origin in their interior. The optimal control law is defined through the optimizer that minimizes the infinite-horizon objective function
	\begin{equation}\label{eq::ObjFnc}
	J_{\infty} = \sum_{t=0}^{\infty} \ell(x_{t},u_{t}),
	\end{equation}
	while satisfying the system dynamics and constraints. The stage cost $ \ell(\cdot) $ is given as
	\begin{equation}
	\ell(x_{t},u_{t}) = x_{t}^\top Q x_{t} + u_{t}^\top R  u_{t},
	\end{equation}
	with $  Q \in \mb R^{ n \times n} $ and $  R \in \mb R^{m \times m} $ known positive semi-definite and positive definite matrices, respectively.
\end{subequations}

\begin{subequations}
	In this paper, we restrict our attention to constrained linear systems of the form \eqref{eq::system} whose matrices $ A $, $ B $, $ G $, $ H $, $ Q $ and $ R $ admit a structure which allow us to decompose the original system into an ordered set $ \mc M = \{1, \ldots, M\} $ of $ M $ dynamically coupled subsystems. In this context, the system states $ x_t $ and inputs $ u_t $ are decomposed as $ x_t = [x_{1,t}^\top,\ldots,x_{M,t}^\top]^\top $ and $ u_{t} = [u_{1,t}^\top, \ldots, u_{M,t}^\top]^\top $ where $ x_{i,t} \in \mb R^{n_i} $ and $ u_{i,t} \in \mb R^{m_i} $ denote the local states and inputs of $ i $-th subsystem, respectively. For each $ i $-th subsystem, we define the set $ \mc N_i \subseteq \mc M $ to include these of the subsystems whose states, $ x_{\Ni,t} \in \mb R^{n_\Ni} $, affect its dynamics and constraints. In addition, we define matrices $ U_i \in \{0,1\}^{n_i \times n} $, $ W_\Ni \in \{0,1\}^{n_\Ni \times n} $ and $ V_i \in \{0,1\}^{m_i \times m} $ such that
	\begin{equation}
	x_{i,t} = U_i x_t,\;x_{\Ni,t} = W_\Ni x_t \text{ and } u_{i,t} = V_i u_{t}.
	\end{equation}
	In this setting, the $ i $-th subsystem is defined by state dynamics 
	\begin{equation}
	x_{i,t+1} = A_\Ni x_{\Ni,t} + B_i u_{i,t},
	\end{equation} 
	constraints
	\begin{align}
	& x_{\Ni,t} \in \mc X_\Ni = \{ x_\Ni \in \mb R^{n_\Ni} \; : \; G_\Ni x_\Ni \le g_\Ni \},\\
	& u_{i,t} \in \mc U_i = \{ u_i \in \mb R^{m_i} \; : \; H_i u_i \le h_i \},
	\end{align}
	and objective function
	\begin{equation}
	J_{\infty} = \sum_{t=0}^{\infty} \sum_{i = 1}^M  \ell_i(x_{\Ni,t},u_{i,t}),
	\end{equation}
	where
	\begin{equation}\label{eq::StC}
	\ell_i(x_{\Ni,t},u_{i,t}) = x_{\Ni,t}^\top Q_{\Ni} x_{\Ni,t} + u_{i,t}^\top R_i  u_{i,t}.
	\end{equation} 
	The matrices $ A_\Ni $, $ B_i $, $ G_\Ni $, $ g_\Ni $, $ H_i $, $ h_i $, $ Q_\Ni $ and $ R_i $ are constructed from the problem data in \eqref{eq::system}, e.g., $ A_\Ni = U_i A W_\Ni^\top $ and $ B_i = U_i B V_i^\top $. For the splitting presented here, it is implicitly assumed that the subsystems are only coupled by their states and are decoupled in their inputs. This assumption is not restrictive and only introduced to simplify the exposition; it can easily be lifted by enriching the state space of each subsystem using auxiliary variables. 
\end{subequations}

\subsection{MPC formulation}

In the spirit of MPC, we introduce a value function $ V(\cdot) $ to upper bound the original infinite-horizon objective function by a finite-horizon one given as
\begin{equation*}
\widetilde{J}_{\infty} = V(x_{\Ti}) + \sum_{t \in \mc T} \ell(x_{t},u_{t}) \ge J_\infty,
\end{equation*}
where $ \mc T = \{0,\ldots,T-1\} $ and $ T $ denotes the prediction horizon. The value function, defined as
\begin{equation}\label{eq::vlFin}
V(x_{\Ti}) = x_{\Ti}^\top P_c x_{\Ti},
\end{equation}
where $ P_c $ is a positive definite matrix, upper approximates the cost of operating the system for all $ t \ge T $ under the terminal state feedback control law $ u_t = K_c x_t $. To satisfy state and input constraints for this terminal closed-loop system, we enforce $ x_\Ti $ to lie in a positively invariant set $ \mc X_{\Fi} $ being a subset of $ \mc X $.
\begin{definition} \label{def::PI}
	If for all $ x \in \mc X_\Fi \subseteq \mc X $ it holds
	\begin{equation*}
	(A+B K_c)x \in \mc X_{\Fi} \text{ and } K_c x \in \mc U
	\end{equation*}
	then the set $\mc X_\Fi$ is \textit{positively invariant} for the closed-loop system \eqref{eq::system} under the terminal controller $ u_t = K_c x_t $ for all $ t \ge T $.
\end{definition}

\begin{theorem}(\cite[\S 3]{Mayne2000})\label{thm::ST}
	If there exist functions $ \sigma^1(\cdot) $, $ \sigma^2(\cdot) $ and $ \sigma^3(\cdot) \in \mc K_\infty $ such that $ \forall x \in \mc X_\Fi $: 
	\begin{subequations}
		\begin{eqnarray}
		\sigma^1(\|x\|) \le V(x) \le \sigma^2(\|x\|) \label{eq::LPV1}\\
		\sigma^3(\|x\|) \le \ell(x,K_c x) \label{eq::LPV2}\\
		V((A+B K_c)x) - V(x) \le - \ell(x,K_c x) \label{eq::LPV3}
		\end{eqnarray}
	\end{subequations}
	then the function $ V(\cdot) $ is a Lyapunov function for the closed-loop system \eqref{eq::system} under the controller $ u_{t} = K_c x_{t} $ for all $ t \ge T $.
\end{theorem}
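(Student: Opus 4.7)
The plan is to verify directly that $V(\cdot)$ fulfills the standard Lyapunov function requirements along every closed-loop trajectory starting inside $\mc X_\Fi$ at time $T$. The three ingredients to be combined are (i) the class $\mc K_\infty$ sandwich bound \eqref{eq::LPV1}, which gives positive definiteness and radial unboundedness of $V$ on $\mc X_\Fi$, (ii) the descent inequality \eqref{eq::LPV3}, which controls the one-step evolution of $V$ under the terminal controller, and (iii) Definition \ref{def::PI}, which guarantees that the closed-loop state trajectory never leaves $\mc X_\Fi$, so that the hypotheses of the theorem remain applicable at every subsequent time step.

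First I would fix any $x_T \in \mc X_\Fi$ and define the terminal closed-loop trajectory recursively by $x_{t+1} = (A+BK_c)x_t$ for all $t \ge T$. By Definition \ref{def::PI}, a simple induction on $t$ shows $x_t \in \mc X_\Fi$ and $K_c x_t \in \mc U$ for every $t \ge T$; hence all three hypotheses of Theorem \ref{thm::ST} can be evaluated at $x_t$. Applying \eqref{eq::LPV3} at the generic time $t$ yields $V(x_{t+1}) - V(x_t) \le -\ell(x_t, K_c x_t) \le 0$, where the last inequality follows from $Q \succeq 0$ and $R \succ 0$ in \eqref{eq::StC} (or, more abstractly, from $\sigma^3 \in \mc K_\infty$). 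Thus $V(x_t)$ is a non-increasing function of $t$ along the closed-loop trajectory.

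Combining monotonicity with the sandwich \eqref{eq::LPV1} gives $\sigma^1(\|x_t\|) \le V(x_t) \le V(x_T) \le \sigma^2(\|x_T\|)$, so that $\|x_t\|$ stays uniformly bounded in terms of $\|x_T\|$, which is Lyapunov stability of the origin for the autonomous closed-loop system on $\mc X_\Fi$. To promote this to asymptotic stability, I would telescope \eqref{eq::LPV3} from $T$ to some $N>T$ and use non-negativity of $V$ to conclude that $\sum_{t=T}^{\infty} \ell(x_t, K_c x_t) \le V(x_T) < \infty$; hence $\ell(x_t, K_c x_t) \to 0$, and by \eqref{eq::LPV2} together with $\sigma^3 \in \mc K_\infty$ we deduce $\|x_t\| \to 0$.

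I do not anticipate a real obstacle, since the argument is essentially the classical discrete-time Lyapunov theorem applied to the autonomous system $x_{t+1}=(A+BK_c)x_t$ restricted to the positively invariant set $\mc X_\Fi$. The only point requiring a little care is maintaining applicability of the hypotheses throughout the trajectory; this is precisely why positive invariance is invoked, and is the reason the condition $x_\Ti \in \mc X_\Fi$ rather than merely $x_\Ti \in \mc X$ appears in the MPC formulation. Input admissibility $K_c x_t \in \mc U$ comes for free from Definition \ref{def::PI} and does not enter the Lyapunov argument itself, but is required in order to interpret $\ell(x_t, K_c x_t)$ as an admissible stage cost of the closed-loop system.
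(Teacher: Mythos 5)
Your proof is correct. The paper does not prove this theorem at all --- it is quoted directly from Mayne et al.\ with a citation --- and your argument is precisely the standard discrete-time Lyapunov argument that the citation points to: positive invariance of $\mc X_\Fi$ (Definition \ref{def::PI}) keeps the trajectory inside the region where \eqref{eq::LPV1}--\eqref{eq::LPV3} apply, the descent inequality telescopes to give summability of the stage cost, and \eqref{eq::LPV2} converts $\ell(x_t,K_cx_t)\to 0$ into $\|x_t\|\to 0$. You also correctly identify the one point needing care, namely that invariance is what licenses applying the hypotheses at every step of the induction.
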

Notice that conditions \eqref{eq::LPV1} and \eqref{eq::LPV2} in Theorem \eqref{thm::ST} are satisfied by construction of value and stage cost functions. Condition \eqref{eq::LPV3} guarantees that the terminal controller stabilizes the system and $ V(\cdot) $ is an upper approximation of the true value function. The conditions in Theorem \ref{thm::ST} can be reformulated as LMIs by applying Schur complement techniques discussed in \cite{Boyd1994}. The resulting LMIs can efficiently be solved using numerical tools for semi-definite programming to compute matrices $ K_c $ and $ P_c $.

Having $ K_c $ and $ P_c $ computed, a positively invariant set $ \mc X_\Fi $ needs to be derived to guarantee state and input constraint satisfaction for the terminal closed-loop system. Ellipsoidal sets of the form
\begin{equation*}
\mc X_{\Fi} = \{ x \in \mb R^n : x^\top P_c x \le \alpha\},
\end{equation*}
where $ \alpha $ is a positive scalar, are common choices for positively invariant sets. This mainly stems from the fact that finding the maximum $ \alpha $ such that $ \mc X_\Fi $ respects the conditions in Definition \ref{def::PI} can be cast as a linear optimization problem \cite[\S 5.2]{Boyd1994}. However, these ellipsoidal sets can only provide an inner approximation of the maximum invariant set, $ \mc X_{\infty} $. For a linear and stable system with bounded constraint sets containing the origin, as the closed-loop system in \eqref{eq::system}, the maximum invariant set is given by a polyhedral set of the form
\begin{equation*}
\mc X_{\infty} = \{ x \in \mb R^n : A_\Fi x \le b_\Fi\}.
\end{equation*}
Here, the matrices $ A_\Fi $ and $ b_\Fi $ are calculated through an iterative procedure. Despite finite-time termination guarantees \cite{Gilbert1991}, it requires considerably higher computational effort than that of the ellipsoids. This makes the calculation of $ \mc X_{\infty} $ a hard task that is typically avoided for systems of large dimensions. In this centralized framework, the MPC optimization problem is given as follows:
\begin{equation}\label{Centralized}
\begin{array}{l}
\min~ V(x_{\Ti}) + \displaystyle \sum_{i \in \mc M} \left(\sum_{t \in \mc T}  \ell_i(x_{\Ni,t},u_{i,t})\right) \\[2ex]
\left.\begin{array}{r@{}l}\st~ & x_{i,t+1} = A_{\Ni} x_{\Ni,t} + B_{i}  u_{i,t}\\
& (x_{\Ni,t}, u_{i,t}) \in \mc X_\Ni \times \mc U_i\end{array}\right \rbrace \forall i\in\mathcal{M}\\
\phantom{\min~}x_{\Ti} \in {\mc X}_{\Fi}
\end{array} \tag{$\mc{C}$}
\end{equation}
with optimization variables $ (x_{t},u_{t}) $ for all $ t \in \mc T $. Problem \eqref{Centralized} is non-amendable to distributed computation algorithms since its value function $ V(\cdot) $ and terminal set $ \mc X_\Fi $ can admit a dense structure. This is because the computed $ K_c $ and $ P_c $ from solving \eqref{eq::LPV3} do not necessarily admit any distributed structure, even if the problem dynamics and constraints do.

To retain the distributed structure of the problem, the value function and terminal set of the MPC formulation need to respect the coupling structure of the system. To achieve this, the terminal controller of the $ i $-th subsystem is designed as
\begin{equation*}
u_{i,t} = K_\Ni x_{\Ni,t},\;\; \forall t \ge T,
\end{equation*} 
where $ K_\Ni \in \mb R^{m_i \times n_{\Ni}}$. The value function, now denoted by $ \wh{V}_\Fi(\cdot) $ to distinguish it from the non separable one in \eqref{eq::vlFin}, is given as
\begin{equation*}
\wh{V}(x_\Ti) = \sum_{i = 1}^{M} \wh{V}_i(x_{i,\Ti}),
\end{equation*}
with each $ \wh{V}_i(\cdot) $ being formulated as
\begin{equation*}
\wh{V}_i(x_{i,\Ti}) = x_{i,\Ti}^\top P_{i} x_{i,\Ti},
\end{equation*}
where $ P_{i} \in \mb R^{n_i \times n_i} $ is a positive definite matrix. Using similar techniques to \cite{Boyd1994}, the problem of finding $ K_\Ni $ and $ P_i $ for all $ i \in \mc M $ which fulfill the stability conditions of Theorem \ref{thm::ST} can be cast as a convex optimization problem involving LMIs. 

Similarly, the terminal set, now denoted by $ \wh{\mc X}_\Fi $, also need to admit a decoupled structure given as
\begin{equation*}
\wh{\mc X}_\Fi = \wh{\mc X}_{\Fi,1} \times \dots \times \wh{\mc X}_{\Fi,M}
\end{equation*}
with $ \wh{\mc X}_{\Fi,i} = \{x_{i} \in \mb R^{n_{i}}: x_{i}^\top P_{i} x_{i} \le \alpha_i \} $ (e.g., \cite{Conte2012,DarivianakisACC}) or $ \wh{\mc X}_{\Fi,i} = \{x_{i} \in \mb R^{n_{i}}: A_{\Fi,i} x_{i} \le b_{\Fi,i} \} $ (e.g., \cite{Keviczky2004,Riverso2014}).

In this distributed framework, the MPC optimization problem is given as follows:
\begin{equation}\label{Decentralized}
\begin{array}{l}
\min~ \displaystyle \sum_{i \in \mc M} \left(\wh{V}_i(x_{i,\Ti}) + \sum_{t \in \mc T}  \ell_i(x_{\Ni,t},u_{i,t})\right) \\[2ex]
\left.\begin{array}{r@{}l}\st~ & x_{i,t+1} = A_{\Ni} x_{\Ni,t} + B_{i}  u_{i,t},\\
& (x_{\Ni,t}, u_{i,t}) \in \mc X_\Ni \times \mc U_i,\\ 
& x_{i,\Ti} \in \wh{\mc X}_{\Fi,i}.\end{array}\right \rbrace \forall i\in\mathcal{M}
\end{array} \tag{$\mc{D}$}
\end{equation}
with optimization variables $ (x_{\Ni,t},u_{i,t}) $ for all $ i \in \mc M $, $ t \in \mc T $. Problem \eqref{Decentralized} exhibits the desired distributed structure which is amendable to distributed computation algorithms (e.g., the alternating method of multipliers \cite{Boyd2011}) to efficiently solve it. We emphasize that the quality of the generated solution greatly depends on the shape and size of the decoupled terminal sets. This is to say that if the considered terminal regions are small then the effort the system needs to take to push $ x_{\Ti} $ into these terminal sets can be large or in some instances even not feasible. The main reason for this conservativeness is attributed to the non-consideration of system constraints and current state in the design of $ K_\Ni $ and $ P_i $ for all $ i \in \mc M $. In current state-of-the art approaches, as in \cite{Mayne2005,Richards2007,Trodden2010,Farina2012,Riverso2014,Lucia2015,Trodden2017,Keviczky2004,Venkat2005,Stewart2010,Maestre2011,Giselsson2014,Conte2012,DarivianakisACC}, the design of stabilizing terminal controllers and value functions typically relies merely on satisfying the stability conditions of Theorem \ref{thm::ST}, while the computation of the terminal positively invariant sets is performed afterwards. In what follows, we propose a new machinery for the design of the terminal controllers based on the constraints and current state of the system. This allow us to couple the design of the stabilizing terminal controllers and the invariant terminal set computation under the same optimization problem.

\section{Adaptive distributed MPC}\label{sec::design}

\subsection{Invariant terminal sets}
For each $ i \in \mc M $, we consider ellipsoidal terminal sets of the form
\begin{equation*}
\wh{\mc X}_{\Fi,i}(\alpha_i) = \{x_{i} \in \mb R^{n_{i}}: x_{i}^\top Z_{i} x_{i} \le \alpha_i \},
\end{equation*}
where $ Z_i $ is a predefined positive definite matrix and $ \alpha_i $ is a scalar decision variable. To ease exposition, we define the decision variables matrices $ \alpha = \text{diag}(\alpha_1, \ldots, \alpha_M) $ and $ \alpha_\Ni = W_\Ni \alpha W_\Ni^\top $, and the invariant terminal set $ \wh{\mc X}_{\Fi,\Ni}(\alpha_\Ni) = \bigtimes_{j \in \Ni} \wh{\mc X}_{\Fi,j}(\alpha_i) $. The following proposition provides the necessary conditions for $ \wh{\mc X}_{\Fi,i}(\alpha_i) $ to be positively invariant.
\begin{proposition}\label{prop::PI}
	If for each subsystem $ i \in \mc M $ it holds that $ \forall x_{\Ni} \in \wh{\mc X}_{\Fi,\Ni}(\alpha_\Ni) $ then
	\begin{subequations}
		\begin{eqnarray}
		(A_\Ni +B_i K_\Ni)x_{\Ni} \in \wh{\mc X}_{\Fi,i}(\alpha_i), \label{prop::PI::eq1}\\
		x_\Ni \in \mc X_\Ni,\label{prop::PI::eq2}\\
		K_\Ni x_{\Ni} \in \mc U_i,\label{prop::PI::eq3}
		\end{eqnarray}
	\end{subequations}
	then each set $ \wh{\mc X}_{\Fi,i}(\alpha_i) $ is positively invariant; hence, $ \wh{\mc X}_\Fi(\alpha) = \wh{\mc X}_{\Fi,1}(\alpha_1) \times \cdots \times \wh{\mc X}_{\Fi,M}(\alpha_M) $ is also positively invariant.
\end{proposition}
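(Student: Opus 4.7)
The proof is essentially a direct verification of Definition \ref{def::PI} subsystem-by-subsystem, exploiting the product structure of $\wh{\mc X}_\Fi(\alpha) = \wh{\mc X}_{\Fi,1}(\alpha_1) \times \cdots \times \wh{\mc X}_{\Fi,M}(\alpha_M)$. The plan is to take an arbitrary $x \in \wh{\mc X}_\Fi(\alpha)$, write out what membership in the product set entails for each neighborhood $\mc N_i$, and then apply each of the three hypotheses in turn to check the three items required by positive invariance (inclusion in $\mc X$, closed-loop transition staying in $\wh{\mc X}_\Fi(\alpha)$, and feasibility of the terminal input).

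First, I would observe that $x \in \wh{\mc X}_\Fi(\alpha)$ is equivalent, by the product form of $\wh{\mc X}_\Fi(\alpha)$ and the definition $\wh{\mc X}_{\Fi,\Ni}(\alpha_\Ni) = \bigtimes_{j \in \mc N_i}\wh{\mc X}_{\Fi,j}(\alpha_j)$, to having $x_{\Ni} = W_\Ni x \in \wh{\mc X}_{\Fi,\Ni}(\alpha_\Ni)$ for every $i \in \mc M$. This is the key bridge between the ``global'' element $x$ and the ``local'' hypotheses stated in terms of neighborhood slices.

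Next, I would apply the three hypotheses in order. Hypothesis \eqref{prop::PI::eq2} gives $x_\Ni \in \mc X_\Ni$ for each $i$, which reassembles via the constraint structure $G = [G_\Ni W_\Ni]_{i \in \mc M}$ (implicit in the decomposition) into $x \in \mc X$, so that $\wh{\mc X}_\Fi(\alpha) \subseteq \mc X$. Hypothesis \eqref{prop::PI::eq1} states $(A_\Ni + B_i K_\Ni) x_\Ni \in \wh{\mc X}_{\Fi,i}(\alpha_i)$, but $(A_\Ni + B_i K_\Ni) x_\Ni = A_\Ni x_\Ni + B_i K_\Ni x_\Ni$ is precisely the expression for the successor local state $x_{i,t+1}$ under the terminal control law $u_{i,t} = K_\Ni x_{\Ni,t}$. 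Since this inclusion holds for every $i$, the successor $x_{t+1}$ belongs to $\wh{\mc X}_\Fi(\alpha)$ by the product structure. Finally, hypothesis \eqref{prop::PI::eq3} yields $u_{i,t} = K_\Ni x_\Ni \in \mc U_i$ for each $i$, which reassembles into $u_t \in \mc U$ since $\mc U$ decouples across subsystems by assumption. This verifies all three clauses of Definition \ref{def::PI} for the closed-loop system, hence each $\wh{\mc X}_{\Fi,i}(\alpha_i)$ is positively invariant and so is their Cartesian product $\wh{\mc X}_\Fi(\alpha)$.

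There is no real obstacle here: the entire content of the proposition is bookkeeping between the global set $\wh{\mc X}_\Fi(\alpha)$ and its neighborhood slices $\wh{\mc X}_{\Fi,\Ni}(\alpha_\Ni)$, together with the decoupling of $\mc X$ and $\mc U$ induced by the subsystem partitioning. The only subtlety worth spelling out is the consistency condition ``$x_\Ni \in \wh{\mc X}_{\Fi,\Ni}(\alpha_\Ni)$ for all $i \in \mc M$ iff $x \in \wh{\mc X}_\Fi(\alpha)$,'' which follows because each component $x_j$ appears in those $\mc N_i$ that contain $j$, and the constraining ellipsoids agree on $x_j$ through the common matrix $Z_j$ and scalar $\alpha_j$ by the definitions $\alpha_\Ni = W_\Ni \alpha W_\Ni^\top$ and $\wh{\mc X}_{\Fi,\Ni}(\alpha_\Ni) = \bigtimes_{j \in \mc N_i}\wh{\mc X}_{\Fi,j}(\alpha_j)$. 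I would state this compatibility explicitly at the start and then the rest of the argument is a one-line application of each hypothesis.
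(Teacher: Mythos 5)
Your proof is correct: the paper states Proposition \ref{prop::PI} without any proof, treating it as an immediate consequence of Definition \ref{def::PI} and the product structure of the terminal set, and your subsystem-by-subsystem verification is precisely that omitted bookkeeping argument. The only minor point is that your claimed equivalence ``$x\in\wh{\mc X}_\Fi(\alpha)$ iff $x_\Ni\in\wh{\mc X}_{\Fi,\Ni}(\alpha_\Ni)$ for all $i$'' needs $i\in\mc N_i$ for the reverse direction, but since only the forward implication is used in applying the three hypotheses, nothing in the argument is affected.
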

Roughly speaking, the conditions of the above proposition are equivalent to assuming that each $ i $-th subsystem treats the states of its neighboring subsystems as disturbances to its own. Under this assumption the terminal set $ \wh{\mc X}_{\Fi,i}(\alpha_i) $ can be considered as robust positively invariant set and the terminal controller $ K_\Ni $ as a disturbance feedback one. In the sequel, we provide the reformulations of the robust constraints in Proposition \ref{prop::PI}. 
\begin{theorem}
	For each $ i \in \mc M $, condition 
	\begin{equation*} 
	(A_\Ni + B_i K_\Ni) x_\Ni \in \wh{\mc X}_{\Fi,i}(\alpha_i) \text{ for all } x_\Ni \in \wh{\mc X}_{\Fi,\Ni}(\alpha_\Ni),
	\end{equation*}
	holds if $ \exists \lambda_{ij} \ge 0 $ with $ j \in \mc N_i $ such that
	\begin{equation} \label{eq::C1}
	\left[\begin{array}{@{}cc@{}}
	Z_i^{-1}\alpha_i^{1/2} & (A_\Ni \alpha^{1/2}_\Ni + B_i K_\Ni \alpha^{1/2}_\Ni) \\
	(A_\Ni \alpha^{1/2}_\Ni + B_i K_\Ni \alpha^{1/2}_\Ni)^\top & \sum_{j\in \Ni} \lambda_{ij} Z_{ij}
	\end{array}\right] \succeq 0 \text{ and } \sum_{j\in \Ni} \lambda_{ij} \le \alpha_i^{1/2},
	\end{equation}
	where $ Z_{ij} = W_\Ni U_j^\top Z_j U_j W_\Ni^\top $.
\end{theorem}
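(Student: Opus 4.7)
The plan is to recast the set-containment condition as a quadratic-in-quadratic implication and to attack it with a Schur complement followed by a weighted S-procedure argument. First, I would rewrite both sides in $x_\Ni$-coordinates: the target $(A_\Ni + B_i K_\Ni) x_\Ni \in \wh{\mc X}_{\Fi,i}(\alpha_i)$ reads $x_\Ni^\top M x_\Ni \le \alpha_i$ with $M := (A_\Ni + B_i K_\Ni)^\top Z_i (A_\Ni + B_i K_\Ni)$, while the hypothesis $x_\Ni \in \wh{\mc X}_{\Fi,\Ni}(\alpha_\Ni)$ is equivalent, via the identity $x_j = U_j W_\Ni^\top x_\Ni$ valid for $j \in \Ni$, to the family of quadratic constraints $x_\Ni^\top Z_{ij} x_\Ni \le \alpha_j$. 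The task then becomes proving a quadratic inequality under a system of quadratic inequalities, which is the natural setting for a (sufficient) S-procedure relaxation.

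Next I would unpack the LMI \eqref{eq::C1}. Since $Z_i \succ 0$ and $\alpha_i > 0$, the leading block $Z_i^{-1}\alpha_i^{1/2}$ is positive definite, so a Schur complement converts \eqref{eq::C1} into the equivalent matrix inequality
\[
\sum_{j\in\Ni} \lambda_{ij}\, Z_{ij} \;\succeq\; \alpha_i^{-1/2}\, \alpha_\Ni^{1/2}\, M\, \alpha_\Ni^{1/2}.
\]
Conjugating both sides by $\alpha_\Ni^{-1/2}$, and exploiting that $\alpha_\Ni^{1/2}$ is block-diagonal with $\alpha_j^{1/2}$-scaled blocks commuting in a block sense with each $Z_{ij}$ (which is supported on the $j$-th block of the $\Ni$-indexed coordinates), I obtain
\[
\sum_{j\in\Ni} \lambda_{ij}\,\alpha_j^{-1}\, Z_{ij} \;\succeq\; \alpha_i^{-1/2}\, M.
\]

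The concluding step is the weighted combination. Picking any $x_\Ni \in \wh{\mc X}_{\Fi,\Ni}(\alpha_\Ni)$ and evaluating the quadratic form on both sides yields
\[
\sum_{j\in\Ni} \lambda_{ij}\, \alpha_j^{-1}\, x_j^\top Z_j x_j \;\ge\; \alpha_i^{-1/2}\, x_\Ni^\top M x_\Ni.
\]
Since $x_j^\top Z_j x_j \le \alpha_j$ for every $j \in \Ni$, the left-hand side is bounded above by $\sum_{j\in\Ni}\lambda_{ij}$, which by the scalar inequality in \eqref{eq::C1} is at most $\alpha_i^{1/2}$. Multiplying through by $\alpha_i^{1/2}$ gives $x_\Ni^\top M x_\Ni \le \alpha_i$, which is the desired containment.

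The main obstacle will be the careful bookkeeping in the Schur complement step and in the interaction between the block-diagonal $\alpha_\Ni^{1/2}$ and the sparse projection structure of $Z_{ij} = W_\Ni U_j^\top Z_j U_j W_\Ni^\top$; once that algebra is handled, the weighted S-procedure conclusion is immediate. A secondary subtlety worth flagging is that the LMI, being only a sufficient condition for a quadratic implication with multiple quadratic constraints, is in general conservative, but this one-sided form is precisely what is needed to embed the robust invariance condition \eqref{prop::PI::eq1} as a tractable constraint in the DMPC optimization.
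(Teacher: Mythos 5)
Your proof is correct and rests on the same ingredients as the paper's: the $\alpha^{1/2}$ rescaling, a Schur complement on the positive definite block $Z_i^{-1}\alpha_i^{1/2}$, and the S-procedure multipliers $\lambda_{ij}\ge 0$ with the budget $\sum_{j\in\Ni}\lambda_{ij}\le\alpha_i^{1/2}$. The only difference is direction: the paper derives the LMI from the robust containment by substituting $x_j=\alpha_j^{1/2}s_j$ and invoking the S-lemma, whereas you start from the LMI and verify the containment directly via the weighted combination of the constraints $x_j^\top Z_j x_j\le\alpha_j$ --- this is just the (easy) sufficiency direction of the same S-procedure, made explicit, and is arguably the more natural way to argue a one-sided ``holds if'' claim.
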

\begin{proof}
	Parsing the expression of invariance for the $ i $-th subsystem, we get
	\begin{equation*}
	\begin{array}{l}
	x_{\Ni}^\top (A_\Ni + B_i K_\Ni)^\top Z_i (A_\Ni + B_i K_\Ni) x_{\Ni} \le \alpha_i \text{ for all } x_{j}^\top Z_j x_{j} \le \alpha_j \text{ with } j \in \mc N_i.
	\end{array}
	\end{equation*}
	We use the auxiliary variable $ s_i \in \mb R^{n_i} $ to make the substitution $ x_i = \alpha_i^{1/2} s_i $. Using this, the robust constraint above is equivalently written as
	\begin{equation*}
	\begin{array}{l}
	s_{\Ni}^\top (A_\Ni\alpha_\Ni^{1/2} + B_i K_\Ni\alpha_\Ni^{1/2})^\top Z_i (A_\Ni\alpha_\Ni^{1/2} + B_i K_\Ni\alpha_\Ni^{1/2}) s_{\Ni} \le \alpha_i \text{ for all } s_{j}^\top Z_j s_{j} \le 1 \text{ with } j \in \mc N_i \Leftrightarrow \\
	s_{\Ni}^\top (A_\Ni\alpha_\Ni^{1/2} + B_i K_\Ni\alpha_\Ni^{1/2})^\top Z_i \alpha_i^{-1/2} (A_\Ni\alpha_\Ni^{1/2} + B_i K_\Ni\alpha_\Ni^{1/2}) s_{\Ni} \le \alpha_i^{1/2} \text{ for all } s_{\Ni}^\top Z_{ij} s_{\Ni} \le 1 \text{ with } j \in \mc N_i.
	\end{array}
	\end{equation*}
	Now using the S-lemma \cite[\S 2.6.3]{Boyd1994}, the robust constraint above  holds if $ \exists \lambda_{ij} \ge 0 $ with $ j \in \mc N_i $ such that
	\begin{equation*}
	(A_\Ni\alpha_\Ni^{1/2} + B_i K_\Ni\alpha_\Ni^{1/2})^\top Z_i \alpha_i^{-1/2} (A_\Ni\alpha_\Ni^{1/2} + B_i K_\Ni\alpha_\Ni^{1/2}) \preceq \sum_{j\in \Ni} \lambda_{ij} Z_{ij},
	\end{equation*}
	and 
	\begin{equation*}
	\sum_{j\in \Ni} \lambda_{ij} \le \alpha_i^{1/2}.
	\end{equation*}
	By applying the Schur-complement the proof is concluded.
\end{proof}

We continue by providing tractable approximations to conditions \eqref{prop::PI::eq2} and \eqref{prop::PI::eq3} of Proposition \ref{prop::PI} which guarantee that the state and input constraints of the system are satisfied by the terminal controller. To do so, we denote the $ \ell $-th row (out of $  n_{g,i} $ rows) of the $ G_\Ni $ and $ g_\Ni $ state constraint matrices by $ G_{\Ni}^\ell $ and $ g_{\Ni}^\ell $, respectively. Similarly, we denote the $ \ell $-th row (out of $  n_{h,i} $ rows) of the $ H_\Ni $ and $ h_i $ input constraint matrices by $ H_{\Ni}^\ell $ and $ h_{i}^\ell $, respectively. 
\begin{theorem}
	For each $ i \in \mc M $, the $ \ell $-th state constraint
	\begin{equation*}
	G_{\Ni}^\ell x_{\Ni} \le g_{\Ni}^\ell \text{ for all } x_\Ni \in \wh{\mc X}_{\Fi,\Ni}(\alpha_\Ni),
	\end{equation*}
	with $ \ell = 1, \ldots, n_{g,i} $ holds if $ \exists \tau_{ij}^\ell \ge 0 $ with $ j \in \mc N_i $ such that
	\begin{equation} \label{eq::C2}
	\begin{bmatrix}
	g^\ell_{\Ni} & G^{\ell}_{\Ni}\alpha^{1/2}_\Ni \\
	\alpha^{1/2}_\Ni G^{\ell\,\top}_{\Ni} & \sum_{j\in \Ni} \tau_{ij}^\ell Z_{ij}
	\end{bmatrix} \succeq 0 \text{ and} \sum_{j\in \Ni} \tau_{ij}^\ell \le g^\ell_{\Ni}.
	\end{equation}
	Similarly, the $ \ell $-th input constraint
	\begin{equation*}
	H_i^\ell K_\Ni x_{\Ni} \le h^\ell_i \text{ for all } x_\Ni \in \wh{\mc X}_{\Fi,\Ni}(\alpha_\Ni),
	\end{equation*}
	with $ \ell = 1, \ldots, n_{h,i} $ holds if $ \exists \rho_{ij}^\ell \ge 0 $ with $ j \in \mc N_i $ such that
	\begin{equation}\label{eq::C3}
	\begin{bmatrix}
	h^\ell_{i} & H^{\ell}_{i} K_\Ni \alpha^{1/2}_\Ni \\
	\alpha^{1/2}_\Ni K_\Ni^\top H^{\ell\,\top}_{i} &  \sum_{j\in \Ni} \rho_{ij}^\ell Z_{ij}
	\end{bmatrix} \succeq 0 \text{ and} \sum_{j\in \Ni} \rho_{ij}^\ell \le h^\ell_{i},
	\end{equation}
	where $ Z_{ij} = W_\Ni U_j^\top Z_j U_j W_\Ni^\top $.
\end{theorem}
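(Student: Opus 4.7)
My plan is to mirror the proof of the previous theorem: first normalize the ellipsoidal uncertainty set by the substitution $x_\Ni = \alpha_\Ni^{1/2} s_\Ni$, then apply a multi-ellipsoid S-procedure to obtain a sufficient condition, and finally invoke a Schur complement to recover the LMI form.

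As shown in that proof, the substitution $x_\Ni = \alpha_\Ni^{1/2} s_\Ni$ converts the membership $x_\Ni \in \wh{\mc X}_{\Fi,\Ni}(\alpha_\Ni)$ into the unit-ellipsoid conditions $s_\Ni^\top Z_{ij} s_\Ni \le 1$ for all $j \in \mc N_i$, while the state constraint becomes the linear inequality $G_\Ni^\ell \alpha_\Ni^{1/2} s_\Ni \le g_\Ni^\ell$. Since this is linear rather than quadratic in $s_\Ni$, I would strengthen it to its square: observing that feasibility of the LMI in \eqref{eq::C2} forces $g_\Ni^\ell \ge 0$, it is enough to establish $(G_\Ni^\ell \alpha_\Ni^{1/2} s_\Ni)^2 \le (g_\Ni^\ell)^2$ uniformly over the unit ellipsoids, since then $|G_\Ni^\ell \alpha_\Ni^{1/2} s_\Ni| \le g_\Ni^\ell$ and the original one-sided inequality follows.

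Writing $a = \alpha_\Ni^{1/2} (G_\Ni^\ell)^\top$, the strengthened condition becomes the robust quadratic inequality $s_\Ni^\top a a^\top s_\Ni \le (g_\Ni^\ell)^2$ over $s_\Ni^\top Z_{ij} s_\Ni \le 1$. I would then invoke the relaxed multi-ellipsoid S-procedure \cite[\S 2.6.3]{Boyd1994} to reduce this to the pair of sufficient matrix conditions $a a^\top \preceq g_\Ni^\ell \sum_{j \in \mc N_i} \tau_{ij}^\ell Z_{ij}$ and $\sum_{j \in \mc N_i} \tau_{ij}^\ell \le g_\Ni^\ell$ with $\tau_{ij}^\ell \ge 0$; chaining these and using $s_\Ni^\top Z_{ij} s_\Ni \le 1$ yields $(a^\top s_\Ni)^2 \le g_\Ni^\ell \cdot g_\Ni^\ell = (g_\Ni^\ell)^2$. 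A Schur complement applied to the first matrix inequality then reproduces \eqref{eq::C2} verbatim. The input constraint is handled by the identical argument with $G_\Ni^\ell$ replaced by $H_i^\ell K_\Ni$ and $g_\Ni^\ell$ replaced by $h_i^\ell$, producing \eqref{eq::C3}.

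The main subtlety is the asymmetric splitting of the squared bound as $g_\Ni^\ell \cdot g_\Ni^\ell$ rather than as a single scalar $(g_\Ni^\ell)^2$: this is precisely what keeps $g_\Ni^\ell$ itself (rather than its square) in the $(1,1)$ entry of the LMI and the multiplier bound linear in $g_\Ni^\ell$, thereby preserving the separable affine dependence on $\alpha$ that makes \eqref{eq::C2}--\eqref{eq::C3} compatible with the surrounding DMPC decomposition. After this observation, the actual bookkeeping of block dimensions and Schur complements is routine.
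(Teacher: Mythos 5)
Your proposal is correct and follows essentially the same route as the paper's proof: substitution $x_\Ni = \alpha_\Ni^{1/2} s_\Ni$, passing to the squared (norm) form with the asymmetric split $(g_\Ni^\ell)^2 = g_\Ni^\ell \cdot g_\Ni^\ell$, the multi-ellipsoid S-procedure, and a final Schur complement. If anything you are slightly more careful than the paper, which asserts the one-sided and two-sided forms are equivalent (true here only because the unit ellipsoids are symmetric about the origin) where you correctly frame it as a sufficient strengthening requiring $g_\Ni^\ell \ge 0$.
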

\begin{proof}
	Parsing the expression \eqref{prop::PI::eq2} for the state constraints we get
	\begin{equation*}
	G^\ell_{\Ni} x_{\Ni} \le g^\ell_{\Ni} \text{ for all } x_{j}^\top Z_{j} x_{j} \le \alpha_j \text{ with } j \in \mc N_i.
	\end{equation*}
	We introduce the auxiliary variable $ s_i \in \mb R^{n_i} $ to make the substitution $ x_i = \alpha_i^{1/2} s_i $. Using this, the robust constraint above  is equivalently written as
	\begin{equation*}
	G^\ell_{\Ni} \alpha^{1/2}_\Ni s_{\Ni,t} \le g^\ell_{\Ni} \text{ for all } s_{j}^\top Z_{j} s_{j} \le 1 \text{ with } j \in \mc N_i.
	\end{equation*}
	It is easy to verify that in case of ellipsoidal sets the robust constraint above  is equivalent to 
	\begin{equation*}
	\begin{array}{l}
	\|G^\ell_{\Ni} \alpha^{1/2}_\Ni s_{\Ni,t}\|_2 \le g^\ell_{\Ni} \text{ for all } s_{j}^\top Z_{j} s_{j} \le 1 \text{ with } j \in \mc N_i \Leftrightarrow \\
	s_{\Ni,t}^\top \alpha^{1/2}_\Ni G^{\ell\,\top}_{\Ni} (g^{\ell})^{-1}_{i} G^j_{\Ni} \alpha^{1/2}_\Ni s_{\Ni,t}\le g^\ell_{\Ni} \text{ for all } s_{\Ni}^\top Z_{ij} s_{\Ni} \le 1 \text{ with } j \in \mc N_i. \\
	\end{array}
	\end{equation*}
	Applying the S-lemma, this robust constraint holds if $ \exists \tau_{ij}^\ell \ge 0 $ with $ j \in \mc N_i $ such that
	\begin{equation*}
	\begin{bmatrix}
	g^\ell_{\Ni} & 0 \\
	0 & -\alpha^{1/2}_\Ni G^{\ell\,\top}_{\Ni} g^{\ell,-1}_{i} G^\ell_{\Ni} \alpha^{1/2}_\Ni
	\end{bmatrix} \succeq \sum_{j \in \mc N_i} \tau_{ij}^\ell \begin{bmatrix}
	1 & 0 \\
	0 & - Z_{ij}
	\end{bmatrix}
	\end{equation*} 
	Then, we apply the Schur complement to obtain \eqref{eq::C2}.
	Following the exact similar derivation arguments, one can prove that the equivalent of the $ \ell $-th input constraint in \eqref{prop::PI::eq3}, given as
	\begin{equation*} 
	H_i^\ell K_\Ni \alpha^{1/2}_\Ni s_{\Ni,t} \le h^\ell_i \text{ for all } s_{j}^\top Z_{j} s_{j} \le 1 \text{ with } j \in \mc N_i, \\
	\end{equation*}
	with $ \ell = 1, \ldots, n_{h,i} $, holds if \eqref{eq::C3} is satisfied. This concludes the proof.
\end{proof}

\subsection{Stability of terminal closed-loop system}

To ensure stability of the terminal closed-loop system \eqref{eq::system} under the control law $ u_{i,t} = K_\Ni x_{\Ni,t} $ for all $ i \in \mc M $ and $ t \ge T $, the conditions of Theorem \ref{thm::ST} need to be satisfied. The non decoupled structure of these conditions makes them unsuitable for explicit consideration in the formulation of Problem \eqref{Decentralized}. Instead, we adopt the notion of structured control Lyapunov functions, introduced in \cite{Jokic2009}, which allow us to consider the conditions for stability in a way that respects the distributed structure of our system.
\begin{theorem}(\cite[\S 3.2]{Jokic2009})\label{thm::StD} 
	If for each $ i \in \mc M $ there exist functions $ \sigma_i^1(\cdot) $, $ \sigma_i^2(\cdot) $ and $ \sigma_i^3(\cdot) \in \mc K_\infty $ such that $ \forall x_i \in \wh{\mc X}_{\Fi,i}(\alpha_i) $:
	\begin{subequations}
		\begin{eqnarray}
		\sigma_i^1(\|x_i\|) \le \wh{V}_{i}(x_i) \le \sigma_i^2(\|x_i\|) \label{eq::LPVD1}\\
		\sigma_i^3(\|x_i\|) \le \ell_i(x_\Ni,K_\Ni x_\Ni) \label{eq::LPVD2} \\
		\wh{V}_{i}((A_\Ni + B_i K_\Ni)x_\Ni) - \wh{V}_{i}(x_i) \le - \ell_i(x_\Ni,K_\Ni x_\Ni) + \gamma_i(x_\Ni) \label{eq::LPVD3} \\
		\sum_{i=1}^M \gamma_i(x_\Ni) \le 0  \label{eq::LPVD4}
		\end{eqnarray}
	\end{subequations}
	then the function $ \wh{V}(x_{i,t}) = \sum_{i=1}^M \wh{V}_{i}(x_{i,t}) $ is a Lyapunov function for the closed-loop system \eqref{eq::system} under the terminal controllers $ u_{i,t} = K_\Ni x_{\Ni,t} $ for all $ i \in \mc M $ and $ t \ge T $.
\end{theorem}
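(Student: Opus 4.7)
The plan is to establish that the aggregate function $\wh{V}(x) = \sum_{i=1}^M \wh{V}_i(x_i)$ satisfies the three classical Lyapunov conditions of Theorem \ref{thm::ST} over the set $\wh{\mc X}_\Fi(\alpha) = \wh{\mc X}_{\Fi,1}(\alpha_1) \times \cdots \times \wh{\mc X}_{\Fi,M}(\alpha_M)$, for the closed-loop system driven by the distributed terminal controller $u_t = [K_{\Ni} x_{\Ni,t}]_{i \in \mc M}$. Positive invariance of $\wh{\mc X}_\Fi(\alpha)$ under this controller has already been established in Proposition \ref{prop::PI}, so $\wh{V}$ is well-defined along any trajectory starting in this set.

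The central step is to sum condition \eqref{eq::LPVD3} over $i \in \mc M$. Observing that $\sum_{i=1}^M \wh{V}_i((A_\Ni + B_i K_\Ni)x_\Ni) = \wh{V}(x_{t+1})$ is exactly the value of the aggregate Lyapunov function at the successor state produced by the terminal controller, and that $\sum_{i=1}^M \ell_i(x_{\Ni,t}, K_\Ni x_{\Ni,t}) = \ell(x_t, K x_t)$ is the stage cost of the full system, summing yields
\begin{equation*}
\wh{V}(x_{t+1}) - \wh{V}(x_t) \le - \ell(x_t, K x_t) + \sum_{i=1}^M \gamma_i(x_{\Ni,t}).
\end{equation*}
Condition \eqref{eq::LPVD4} then eliminates the coupling residual $\sum_i \gamma_i$, giving the desired one-step decrease of $\wh{V}$ by at least the full stage cost. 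This is the crux of the result and the only place where the distributed-structure hypothesis \eqref{eq::LPVD4} is actually used.

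The remaining two conditions of Theorem \ref{thm::ST} are obtained by packaging the componentwise bounds into bounds in terms of the full state norm $\|x\|$. For the upper bound, $\wh{V}(x) \le \sum_i \sigma_i^2(\|x_i\|) \le \sum_i \sigma_i^2(\|x\|)$ because $\|x_i\| \le \|x\|$, and the sum of $\mc K_\infty$ functions is again $\mc K_\infty$, so one may set $\sigma^2(r) := \sum_i \sigma_i^2(r)$. For the lower bound, note that $\|x\|^2 = \sum_i \|x_i\|^2$ implies the existence of an index $j^\star$ with $\|x_{j^\star}\| \ge \|x\|/\sqrt{M}$, so
\begin{equation*}
\wh{V}(x) \ge \sum_i \sigma_i^1(\|x_i\|) \ge \sigma_{j^\star}^1(\|x\|/\sqrt{M}) \ge \min_{j \in \mc M} \sigma_j^1(\|x\|/\sqrt{M}) =: \sigma^1(\|x\|),
\end{equation*}
and $\sigma^1$ is $\mc K_\infty$ as the pointwise minimum of finitely many $\mc K_\infty$ functions. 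The very same trick applied to \eqref{eq::LPVD2} yields $\ell(x, K x) \ge \sum_i \sigma_i^3(\|x_i\|) \ge \sigma^3(\|x\|)$ for a $\mc K_\infty$ function $\sigma^3$, which combined with the summed decrease inequality closes Theorem \ref{thm::ST}.

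The main conceptual obstacle, rather than the summation itself, is establishing that the locally defined class-$\mc K_\infty$ bounds on each $\wh{V}_i$ and $\ell_i$ can be lifted to global class-$\mc K_\infty$ bounds in $\|x\|$; this is where the norm-equivalence argument above enters. Once condition \eqref{eq::LPVD4} is invoked to discard the coupling terms, the remainder of the proof reduces to verifying that $\wh V$ inherits the standard Lyapunov structure on the product invariant set, which is routine.
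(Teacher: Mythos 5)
Your argument is correct. Note that the paper itself does not prove this theorem---it is imported verbatim from \cite[\S 3.2]{Jokic2009} and stated without proof---so there is no in-paper argument to compare against; what you have written is the standard (and, as far as I can tell, the intended) route: sum \eqref{eq::LPVD3} over $i$, cancel the coupling residual via \eqref{eq::LPVD4} to obtain $\wh{V}(x_{t+1})-\wh{V}(x_t)\le-\ell(x_t,Kx_t)$, and lift the componentwise class-$\mc K_\infty$ bounds to bounds in $\|x\|$ using $\|x_i\|\le\|x\|$ for the upper bound and $\max_i\|x_i\|\ge\|x\|/\sqrt{M}$ together with nonnegativity of the summands for the lower bounds. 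The two details worth being explicit about---that the pointwise minimum of finitely many $\mc K_\infty$ functions is again $\mc K_\infty$, and that the decrease inequality is only meaningful along trajectories because Proposition \ref{prop::PI} guarantees the successor state stays in $\wh{\mc X}_\Fi(\alpha)$---are both present in your write-up, so the proof is complete as it stands.
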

Notice that Theorem \ref{thm::StD} implies the more general Lyapunov stability Theorem \ref{thm::ST}. Conditions \eqref{eq::LPVD1} and \eqref{eq::LPVD2} in Theorem \eqref{thm::StD} are satisfied by construction of value functions and stage costs. Condition \eqref{eq::LPVD3} together with condition \eqref{eq::LPVD4} guarantees that the terminal controllers stabilize the system and $ \wh{V}(\cdot) $ is an upper approximation of the true value function. Note that the definition above does not impose that each function $ \wh{V}_{i}(\cdot) $ is a control Lyapunov function for the corresponding subsystem in $ \wh{\mc X}_{\Fi,i}(\alpha_i) $. Roughly speaking, this condition allows a local terminal cost to increase as at the same time the sum of all terminal cost in \eqref{eq::LPVD4} decreases. Consider for instance two interconnected subsystems where state $ x_{1,t} $ for subsystem~$ 1 $ at time $ t $ rests at the origin, i.e., $ \wh{V}_{1}(x_{1,t}=0) = 0 $. If the state $ x_{2,t} $ of subsystem $ 2 $ is nonzero, then $ x_{1,t+1} $ will necessarily be driven away from the origin as soon as the controller $ K_{{}_{\mc N_1}} $ can not fully dissipate the effect of $ x_{2,t} $ on subsystem~$ 1 $; causing $ \wh{V}_{1}(x_{1,t+1}\neq 0) $ to increase but $\wh{V}_{1}(x_{1,t+1})$ has to increase less than $ \wh{V}_{2}(x_{2,t+1}) $ decreases, i.e., $ \wh{V}_{2}(x_{1,t+1})- \wh{V}_{2}(x_{1,t}) < \wh{V}_{1}(x_{1,t+1})- \wh{V}_{1}(x_{1,t}) $, such that $ \wh{V}(\cdot) $ is an overall Lyapunov function.

\begin{theorem}
	Conditions \eqref{eq::LPVD3} and \eqref{eq::LPVD4} hold if $ \exists H_\Ni \in \mb R^{n_\Ni \times n_\Ni},\, Y_\Ni \in \mb R^{m_i \times n_\Ni} $ such that
	\begin{equation}\label{eq::C4}
	\begin{bmatrix}
	P_i^{-1} \alpha^{1/2}_i & A_\Ni\alpha^{1/2}_\Ni + B_i Y_\Ni & 0 & 0 \\
	(A_\Ni\alpha^{1/2}_\Ni + B_i Y_\Ni)^\top & P_{ii} \alpha^{1/2}_i + H_\Ni &  \alpha^{1/2}_\Ni Q_{\Ni}^{1/2}  & Y_\Ni^\top R_i^{1/2} \\
	0 & Q_{\Ni}^{1/2} \alpha^{1/2}_\Ni  & \alpha^{1/2}_i I_\Ni & 0 \\
	0 & R_i^{1/2} Y_\Ni  & 0 & \alpha^{1/2}_i
	\end{bmatrix} \succeq 0
	\end{equation}
	and
	\begin{equation}\label{eq::C5}
	\sum_{i = 1}^M W_{\Ni}^\top H_\Ni W_{\Ni} \preceq 0
	\end{equation}
	where $ P_{ii} = W_\Ni U_i^\top P_i U_i W_\Ni^\top $.
\end{theorem}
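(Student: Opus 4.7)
The plan is to reduce (C4) to a per-subsystem quadratic Lyapunov-decrease inequality by two successive Schur-complement operations, read off a candidate slack $\gamma_i$ from the result so as to satisfy (LPVD3), and then aggregate the per-subsystem inequalities using (C5) to obtain (LPVD4).

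The blocks $A_\Ni\alpha_\Ni^{1/2}+B_iY_\Ni$ and $R_i^{1/2}Y_\Ni$ suggest the change of variable $Y_\Ni=K_\Ni\alpha_\Ni^{1/2}$, under which they become $(A_\Ni+B_iK_\Ni)\alpha_\Ni^{1/2}$ and $R_i^{1/2}K_\Ni\alpha_\Ni^{1/2}$. Taking the Schur complement of (C4) against the positive-definite (1,1) block $P_i^{-1}\alpha_i^{1/2}$ eliminates the closed-loop-dynamics row/column and injects $\alpha_i^{-1/2}\alpha_\Ni^{1/2}(A_\Ni+B_iK_\Ni)^\top P_i(A_\Ni+B_iK_\Ni)\alpha_\Ni^{1/2}$ into the remaining (2,2) block. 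A second Schur pass against the scalar-diagonal blocks $\alpha_i^{1/2}I$ and $\alpha_i^{1/2}$ absorbs the stage-cost rows and produces the terms $\alpha_i^{-1/2}\alpha_\Ni^{1/2}Q_\Ni\alpha_\Ni^{1/2}$ and $\alpha_i^{-1/2}\alpha_\Ni^{1/2}K_\Ni^\top R_iK_\Ni\alpha_\Ni^{1/2}$. Multiplying through by the positive scalar $\alpha_i^{1/2}$ gives
$$\alpha_\Ni^{1/2}\bigl[(A_\Ni+B_iK_\Ni)^\top P_i(A_\Ni+B_iK_\Ni)+Q_\Ni+K_\Ni^\top R_iK_\Ni\bigr]\alpha_\Ni^{1/2}\preceq P_{ii}\alpha_i+\alpha_i^{1/2}H_\Ni.$$

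The crucial structural identity is $\alpha_\Ni^{1/2}P_{ii}\alpha_\Ni^{1/2}=\alpha_iP_{ii}$: since $P_{ii}=W_\Ni U_i^\top P_iU_iW_\Ni^\top$ is supported only on the $i$-th block within $\Ni$, the scaling $\alpha_\Ni^{1/2}$ acts there as the scalar $\alpha_i^{1/2}$. Cancelling this common contribution on both sides yields
$$\alpha_\Ni^{1/2}\Delta_i\alpha_\Ni^{1/2}\preceq\alpha_i^{1/2}H_\Ni,\qquad \Delta_i:=(A_\Ni+B_iK_\Ni)^\top P_i(A_\Ni+B_iK_\Ni)-P_{ii}+Q_\Ni+K_\Ni^\top R_iK_\Ni.$$
Testing against any $x_\Ni$, this is exactly the inequality $x_\Ni^\top\Delta_i x_\Ni\le\gamma_i(x_\Ni)$ required by (LPVD3), with the decoupled slack
$$\gamma_i(x_\Ni):=\alpha_i^{1/2}\,x_\Ni^\top\alpha_\Ni^{-1/2}H_\Ni\alpha_\Ni^{-1/2}x_\Ni.$$

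For (LPVD4) I would lift to the full state via $x_\Ni=W_\Ni x$ and use the commutation $W_\Ni^\top\alpha_\Ni^{-1/2}=\alpha^{-1/2}W_\Ni^\top$, valid because both sides embed the $j$-th block of an $\Ni$-vector into the $j$-th block of $x$ with scalar weight $\alpha_j^{-1/2}$. Summing produces
$$\sum_{i\in\mc M}\gamma_i(x_\Ni)=x^\top\alpha^{-1/2}\Bigl(\sum_{i\in\mc M}\alpha_i^{1/2}W_\Ni^\top H_\Ni W_\Ni\Bigr)\alpha^{-1/2}x,$$
whose non-positivity is controlled by $\sum_{i\in\mc M}W_\Ni^\top H_\Ni W_\Ni$ through (C5), establishing (LPVD4). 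The main technical obstacle is the careful two-sided bookkeeping between $\alpha_i$ (a scalar) and $\alpha_\Ni$ (a block-diagonal matrix), culminating in the cancellation $\alpha_\Ni^{1/2}P_{ii}\alpha_\Ni^{1/2}=\alpha_iP_{ii}$; this is the only step where the local block structure of $P_{ii}$ is essential, and without it the Schur reduction would not decouple into the clean (LPVD3)/(LPVD4) form.
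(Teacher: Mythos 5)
Your reduction of \eqref{eq::C4} is the same computation the paper performs, just run in the logically cleaner direction (LMI $\Rightarrow$ Lyapunov decrease rather than a chain of claimed equivalences): the two Schur passes, the substitution $Y_\Ni = K_\Ni\alpha_\Ni^{1/2}$, the identity $\alpha_\Ni^{1/2}P_{ii}\alpha_\Ni^{1/2}=\alpha_i P_{ii}$, and the identification $\gamma_i(x_\Ni)=\alpha_i^{1/2}x_\Ni^\top\alpha_\Ni^{-1/2}H_\Ni\alpha_\Ni^{-1/2}x_\Ni$ all match the paper's substitution $H_\Ni=\alpha_\Ni^{1/2}\Gamma_\Ni\alpha_i^{-1/2}\alpha_\Ni^{1/2}$. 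Up to and including \eqref{eq::LPVD3}, your argument is correct and complete (indeed you get the decrease inequality for all $x_\Ni$, not only on the terminal set).

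The gap is in the last step. You correctly compute
\begin{equation*}
\sum_{i\in\mc M}\gamma_i(x_\Ni)=x^\top\alpha^{-1/2}\Bigl(\sum_{i\in\mc M}\alpha_i^{1/2}\,W_\Ni^\top H_\Ni W_\Ni\Bigr)\alpha^{-1/2}x,
\end{equation*}
but then assert that its non-positivity is ``controlled by'' \eqref{eq::C5}. It is not: \eqref{eq::C5} states $\sum_i W_\Ni^\top H_\Ni W_\Ni\preceq 0$, whereas you need the \emph{weighted} sum $\sum_i\alpha_i^{1/2}W_\Ni^\top H_\Ni W_\Ni\preceq 0$, and since the summands are indefinite (they must be, for the $\gamma_i$ mechanism to do anything) a negative semidefinite unweighted sum does not imply a negative semidefinite weighted one when the $\alpha_i$ differ. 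A two-block counterexample: $M_1=\mathrm{diag}(1,-2)$, $M_2=\mathrm{diag}(-2,1)$ gives $M_1+M_2\preceq 0$ but $4M_1+M_2\not\preceq 0$. So as written the implication \eqref{eq::C5} $\Rightarrow$ \eqref{eq::LPVD4} does not go through; you would need either to replace \eqref{eq::C5} by its $\alpha_i^{1/2}$-weighted version or to rescale $H_\Ni$ by $\alpha_i^{-1/2}$ in the statement. For what it is worth, the paper's own proof has exactly the same defect: its chain of ``$\Leftrightarrow$'' silently inserts a per-term factor $\alpha_i^{-1/2}$ (and uses an $H_\Ni$ with full powers of $\alpha_\Ni$ inconsistent with the one defined in the derivation of \eqref{eq::C4}), so you have faithfully reproduced the paper's algebra; your version merely exposes the weighting mismatch instead of hiding it.
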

\begin{proof}
	Conditions \eqref{eq::LPVD3} is written as: For all $ i \in \mc M $, 
	\begin{equation*}
	V_i((A_\Ni + B_i K_\Ni)x_\Ni) - V_i(x_{i}) \le - \ell_i(x_{\Ni},K_\Ni x_\Ni) + \gamma_i(x_{\Ni}) \text{ for all } x_{j}^\top Z_{j} x_{j} \le \alpha_j \text{ with } j \in \mc N_i.
	\end{equation*}
	This robust inequality is expanded as,
	\begin{equation*}
	x_{\Ni}^\top \big(P_{ii} -(A_\Ni + B_i K_\Ni)^\top P_i (A_\Ni + B_i K_\Ni) -Q_{\Ni} -K_\Ni^\top R_i K_\Ni + \Gamma_{\Ni} \big) x_{\Ni} \ge 0 \text{ for all } x_{j}^\top Z_{j} x_{j} \le \alpha_j \text{ with } j \in \mc N_i.
	\end{equation*}
	We now use the auxiliary variable $ s_i \in \mb R^{n_i} $ to make the substitution $ x_i = \alpha_i^{1/2} s_i $. Using this, the robust constraint above is equivalently written as 
	\begin{equation*}
	\begin{array}{l}
	s_{\Ni}^\top \big(\alpha^{1/2}_\Ni P_{ii}\alpha^{1/2}_\Ni -(A_\Ni\alpha^{1/2}_\Ni + B_i K_\Ni\alpha^{1/2}_\Ni)^\top P_i (A_\Ni\alpha^{1/2}_\Ni + B_i K_\Ni\alpha^{1/2}_\Ni) \\ \hspace*{1cm}-\alpha^{1/2}_\Ni Q_{\Ni} \alpha^{1/2}_\Ni -(K_\Ni \alpha^{1/2}_\Ni)^\top R_i (K_\Ni \alpha^{1/2}_\Ni) + \alpha^{1/2}_\Ni \Gamma_{\Ni} \alpha^{1/2}_\Ni \big) s_{\Ni} \ge 0 \text{ for all } s_{\Ni}^\top Z_{ij} s_{\Ni} \le 1 \text{ with } j \in \mc N_i.
	\end{array}
	\end{equation*}
	Making use of $ \alpha^{1/2}_\Ni {P}_{ii} \alpha^{1/2}_\Ni = \alpha_i^{1/2} {P}_{ii} \alpha_i^{1/2} $, we have that
	\begin{equation*}
	\begin{array}{l}
	s_{\Ni}^\top \big(\alpha^{1/2}_i {P}_{ii}\alpha^{1/2}_i -(A_\Ni\alpha^{1/2}_\Ni + B_i K_\Ni\alpha^{1/2}_\Ni)^\top P_i (A_\Ni\alpha^{1/2}_\Ni + B_i K_\Ni\alpha^{1/2}_\Ni) \\ \hspace*{1cm}-\alpha^{1/2}_\Ni Q_{\Ni} \alpha^{1/2}_\Ni -(K_\Ni \alpha^{1/2}_\Ni)^\top R_i (K_\Ni \alpha^{1/2}_\Ni) + \alpha^{1/2}_\Ni \Gamma_{\Ni} \alpha^{1/2}_\Ni \big) s_{\Ni} \ge 0 \text{ for all } s_{\Ni}^\top Z_{ij} s_{\Ni} \le 1 \text{ with } j \in \mc N_i.
	\end{array}
	\end{equation*}
	Applying the S-lemma the robust constraint above holds if $ \exists \phi_{ij} \ge 0 $ with $ j \in \mc N_i $ such that:
	\begin{equation*}
	\left[\begin{array}{@{}c|c@{}}
	0 & 0 \\ \hline
	0 & \begin{array}{l}
	P_{ii}\alpha^{1/2}_i -(A_\Ni\alpha^{1/2}_\Ni + B_i K_\Ni\alpha^{1/2}_\Ni)^\top P_i \alpha^{-1/2}_i (A_\Ni\alpha^{1/2}_\Ni + B_i K_\Ni\alpha^{1/2}_\Ni) \\ \hspace*{.1cm}-\alpha^{1/2}_\Ni Q_{\Ni}\alpha^{-1/2}_i \alpha^{1/2}_\Ni -(K_\Ni \alpha^{1/2}_\Ni)^\top R_i\alpha^{-1/2}_i (K_\Ni \alpha^{1/2}_\Ni) + \alpha^{1/2}_\Ni \Gamma_{\Ni} \alpha^{-1/2}_i \alpha^{1/2}_\Ni
	\end{array}
	\end{array}\right] \succeq \sum_{j \in \mc N_i} \phi_{ij} \begin{bmatrix}
	1 & 0 \\
	0 & - Z_{ij}
	\end{bmatrix}
	\end{equation*} 
	This implies $ \phi_{ij} = 0 $ for all $ j \in \mc N_i $; hence, the matrix inequality constraint above is equivalent written
	\begin{equation*}
	\begin{array}{l}
	P_{ii}\alpha^{1/2}_i -(A_\Ni\alpha^{1/2}_\Ni + B_i K_\Ni\alpha^{1/2}_\Ni)^\top P_i \alpha^{-1/2}_i (A_\Ni\alpha^{1/2}_\Ni + B_i K_\Ni\alpha^{1/2}_\Ni) \\ \hspace*{1cm}-\alpha^{1/2}_\Ni Q_{\Ni}\alpha^{-1/2}_i \alpha^{1/2}_\Ni -(K_\Ni \alpha^{1/2}_\Ni)^\top R_i\alpha^{-1/2}_i (K_\Ni \alpha^{1/2}_\Ni) + \alpha^{1/2}_\Ni \Gamma_{\Ni} \alpha^{-1/2}_i \alpha^{1/2}_\Ni \succeq 0
	\end{array}
	\end{equation*}
	We use the Schur complement to write this expression as,
	\begin{equation*}
	\begin{array}{l}
	\begin{bmatrix}
	P^{-1}_i \alpha^{1/2}_i & (A_\Ni\alpha^{1/2}_\Ni + B_i K_\Ni\alpha^{1/2}_\Ni) \\
	(A_\Ni\alpha^{1/2}_\Ni + B_i K_\Ni\alpha^{1/2}_\Ni)^\top & P_{ii}\alpha^{1/2}_i + \alpha^{1/2}_\Ni \Gamma_{\Ni} \alpha^{-1/2}_i \alpha^{1/2}_\Ni
	\end{bmatrix} \\ \hspace*{3cm} - \begin{bmatrix}
	0 & 0 \\
	\alpha^{1/2}_\Ni Q_{\Ni}^{1/2}  &  (K_\Ni \alpha^{1/2}_\Ni)^\top R^{1/2}
	\end{bmatrix} \begin{bmatrix}
	\alpha^{-1/2}_i I_\Ni & 0 \\
	0 & \alpha^{-1/2}_i 
	\end{bmatrix} \begin{bmatrix}
	0 & Q_{\Ni}^{1/2} \alpha^{1/2}_\Ni \\
	0 &  R^{1/2} K_\Ni \alpha^{1/2}_\Ni 
	\end{bmatrix} \succeq 0.
	\end{array}
	\end{equation*}
	Applying once again the Schur complement, lead to
	\begin{equation*}
	\begin{bmatrix}
	P_i^{-1} \alpha^{1/2}_i & A_\Ni\alpha^{1/2}_\Ni + B_i K_\Ni\alpha^{1/2}_\Ni & 0 & 0 \\
	(A_\Ni\alpha^{1/2}_\Ni + B_i K_\Ni\alpha^{1/2}_\Ni)^\top & P_{ii} \alpha^{1/2}_i + \alpha^{1/2}_\Ni \Gamma_{\Ni} \alpha^{-1/2}_i \alpha^{1/2}_\Ni &  \alpha^{1/2}_\Ni Q_{\Ni}^{1/2}  & (K_\Ni \alpha^{1/2}_\Ni)^\top R_i^{1/2} \\
	0 & Q_{\Ni}^{1/2} \alpha^{1/2}_\Ni  & \alpha^{1/2}_i I_\Ni & 0 \\
	0 & R_i^{1/2} K_\Ni \alpha^{1/2}_\Ni  & 0 & \alpha^{1/2}_i
	\end{bmatrix} \succeq 0.
	\end{equation*}
	To obtain an LMI expression, we make the substitutions $ Y_\Ni = K_\Ni \alpha^{1/2}_\Ni $ and $ H_\Ni = \alpha^{1/2}_\Ni \Gamma_{\Ni} \alpha^{-1/2}_i \alpha^{1/2}_\Ni $ so that the matrix inequality above can be written as an LMI of the form \eqref{eq::C4}.
	
	Finally, conditions \eqref{eq::LPVD4} is written as:
	\begin{equation*}
	\begin{array}{l}
	\sum_{i = 1}^M \gamma_i(x_{\Ni}) \le 0 \text{ for all } x_{i}^\top Z_{i} x_{i} \le \alpha_i \text{ with } i = 1,\ldots,M \Leftrightarrow\\
	x^\top \big(\sum_{i = 1}^M W_{\Ni}^\top \Gamma_{\Ni} W_{\Ni} \big) x \le 0 \text{ for all } x_{i}^\top Z_{i} x_{i} \le \alpha_i \text{ with } i = 1,\ldots,M \Leftrightarrow\\
	s^\top \big(\sum_{i = 1}^M \alpha W_{\Ni}^\top \Gamma_{\Ni} W_{\Ni} \alpha \big) s \le 0 \text{ for all } s_{i}^\top Z_{i} s_{i} \le 1 \text{ with } i = 1,\ldots,M \Leftrightarrow\\
	s^\top \big(\sum_{i = 1}^M  W_{\Ni}^\top \alpha_\Ni \Gamma_{\Ni} \alpha_\Ni W_{\Ni} \big) s \le 0 \text{ for all } s_{i}^\top Z_{i} s_{i} \le 1 \text{ with } i = 1,\ldots,M \Leftrightarrow\\
	s^\top \big(\sum_{i = 1}^M  W_{\Ni}^\top \alpha_\Ni \Gamma_{\Ni} \alpha^{-1/2}_i \alpha_\Ni W_{\Ni} \big) s \le 0 \text{ for all } s_{i}^\top Z_{i} s_{i} \le 1 \text{ with } i = 1,\ldots,M \Leftrightarrow\\
	s^\top \big(\sum_{i = 1}^M  W_{\Ni}^\top H_\Ni W_{\Ni} \big) s \le 0 \text{ for all } s_{i}^\top Z_{i} s_{i} \le 1 \text{ with } i = 1,\ldots,M
	\end{array}
	\end{equation*}
	Using the S-lemma, we have that the robust constraints above holds if $ \exists \phi_{ij} \ge 0 $ with $ j \in \mc M $ such that:
	\begin{equation*}
	\begin{bmatrix}
	0 & 0 \\
	0 & \sum_{i = 1}^M  W_{\Ni}^\top H_\Ni W_{\Ni}
	\end{bmatrix} \succeq \sum_{j \in \mc M} \phi_{ij} \begin{bmatrix}
	1 & 0 \\
	0 & - U_j^\top Z_j U_j
	\end{bmatrix} 
	\end{equation*} 
	This implies $ \phi_{ij} = 0 $ for all $ j \in \mc N_i $; hence, the matrix inequality constraint above can be written as an LMI of the form \eqref{eq::C5} which concludes the proof.
\end{proof}

To be compatible with the LMI invariant and stability conditions presented previously, we rewrite condition $ x_{i,\Ti} \in \wh{\mc X}_{\Fi,i}(\alpha_i) $ in terms of the square root of the decision variable $ \alpha^{1/2}_i $, as follows:
\begin{equation}\label{eq::C6}
x_{i,\Ti} \in \wh{\mc X}_{\Fi,i}(\alpha_i) \Leftrightarrow \begin{bmatrix}
\alpha^{1/2}_i Z_i & x_{i,\Ti} \\
x_{i,\Ti}^\top & \alpha^{1/2}_i
\end{bmatrix} \succeq 0,
\end{equation}
where the Schur complement is applied. To this end, we define for each $ i \in \mc M $ the set
\begin{equation*}
\wt{\mc X}_{\Fi,i}(\alpha_\Ni) = \{x_i \in \mb R^{n_i} \,:\, \text{ Conditions } \eqref{eq::C2}, \eqref{eq::C3}, \eqref{eq::C4}, \eqref{eq::C5},  \eqref{eq::C6} \text{ hold.} \}.
\end{equation*}

\subsection{Stability and recursive feasibility}

The decentralized MPC problem with adaptive terminal sets is given as
\begin{equation}\label{DecentralizedAdaptive}
\begin{array}{l}
\min~ \displaystyle \sum_{i \in \mc M} \left(\wh{V}_i(x_{i,\Ti}) + \sum_{t \in \mc T}  \ell_i(x_{\Ni,t},u_{i,t})\right) \\[2ex]
\left.\begin{array}{r@{}l}\st~ & x_{i,t+1} = A_{\Ni} x_{\Ni,t} + B_{i}  u_{i,t}\\
& (x_{\Ni,t}, u_{i,t}) \in \mc X_\Ni \times \mc U_i\\ 
& x_{i,\Ti} \in \wt{\mc X}_{\Fi,i}(\alpha_\Ni)\end{array}\right \rbrace \forall i\in\mathcal{M},
\end{array}\tag{$\mc{DA}$}
\end{equation}
with optimization variables $ (x_{\Ni,t}, u_{i,t}, \alpha_\Ni) $ for all $ i \in \mc M $, $ t \in \mc T $.

Following the reasoning in \cite{Mayne2000}, we now show that establishing stability and recursive feasibility for the closed-loop system \eqref{eq::system} under the MPC controller defined in Problem \eqref{DecentralizedAdaptive} is equivalent to requiring that this problem is initially feasible.
\begin{proposition}
	The MPC Problem \eqref{DecentralizedAdaptive} with adaptive terminal sets is recursively feasible. Moreover, the closed-loop system \eqref{eq::system} resulting from applying the MPC controller defined by Problem \eqref{DecentralizedAdaptive} in a receding horizon fashion is asymptotically stable.
\end{proposition}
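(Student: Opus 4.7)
The plan is to follow the template of the classical MPC stability argument (Mayne et al.\ 2000), adapted to the present setting where $\alpha_\Ni$, the terminal controllers $K_\Ni$ (implicit through $Y_\Ni$), and the slack variables $H_\Ni$ are jointly optimized inside \eqref{DecentralizedAdaptive}. I would prove recursive feasibility first and then monotonic decrease of the optimal cost; together, these imply asymptotic stability.

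\textbf{Recursive feasibility.} Let $(x^\star_{\Ni,t|k},u^\star_{i,t|k},\alpha^\star_\Ni,Y^\star_\Ni,H^\star_\Ni)$ denote an optimal solution of \eqref{DecentralizedAdaptive} at time $k$, and let $K^\star_\Ni = Y^\star_\Ni (\alpha^\star_\Ni)^{-1/2}$ be the associated terminal feedback. At time $k+1$ I would construct a candidate solution by the standard shift-and-append: for $t=0,\ldots,T-2$ take $x_{\Ni,t|k+1}=x^\star_{\Ni,t+1|k}$ and $u_{i,t|k+1}=u^\star_{i,t+1|k}$, set $u_{i,T-1|k+1} = K^\star_\Ni x^\star_{\Ni,T|k}$, and define $x_{i,T|k+1}=(A_\Ni + B_i K^\star_\Ni)x^\star_{\Ni,T|k}$; the LMI multipliers and $\alpha_\Ni$ are kept unchanged. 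The dynamics are satisfied by construction. The stage constraints for the shifted portion come from feasibility at time $k$, and for the appended step they follow from conditions \eqref{prop::PI::eq2}–\eqref{prop::PI::eq3} of Proposition \ref{prop::PI}, which are enforced through \eqref{eq::C2}–\eqref{eq::C3} since $x^\star_{i,T|k}\in \wh{\mc X}_{\Fi,i}(\alpha^\star_i)$. Finally, condition \eqref{prop::PI::eq1}, certified by \eqref{eq::C1}, guarantees that $x_{i,T|k+1}\in \wh{\mc X}_{\Fi,i}(\alpha^\star_i)$, so condition \eqref{eq::C6} is satisfied; the remaining LMIs \eqref{eq::C1}, \eqref{eq::C2}, \eqref{eq::C3}, \eqref{eq::C4}, \eqref{eq::C5} do not involve the trajectory and remain feasible with the inherited multipliers.

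\textbf{Cost decrease and stability.} Let $J^\star(k)$ denote the optimal value of \eqref{DecentralizedAdaptive} at time $k$ and let $J^{\text{feas}}(k+1)$ be the cost of the candidate solution above. A direct computation telescopes the stage costs and yields
\begin{equation*}
J^{\text{feas}}(k+1) - J^\star(k) = -\sum_{i\in\mc M}\ell_i(x^\star_{\Ni,0|k},u^\star_{i,0|k}) + \sum_{i\in\mc M}\!\Big(\wh{V}_i\!\big((A_\Ni+B_iK^\star_\Ni)x^\star_{\Ni,T|k}\big)-\wh{V}_i(x^\star_{i,T|k})+\ell_i(x^\star_{\Ni,T|k},K^\star_\Ni x^\star_{\Ni,T|k})\Big).
\end{equation*}
The stability LMIs \eqref{eq::C4}–\eqref{eq::C5}, which by the proof of the preceding theorem encode exactly \eqref{eq::LPVD3}–\eqref{eq::LPVD4} of Theorem \ref{thm::StD}, imply that the bracketed sum is nonpositive. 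Using $J^\star(k+1)\le J^{\text{feas}}(k+1)$, one obtains $J^\star(k+1)\le J^\star(k) - \sum_{i}\ell_i(x^\star_{\Ni,0|k},u^\star_{i,0|k})$. Together with the standard bounds $\ell(x,u)\ge\sigma^3(\|x\|)$ and $\wh V(x)\le\sigma^2(\|x\|)$ on the sums, this identifies $J^\star$ as a Lyapunov function for the closed loop and gives asymptotic stability of the origin.

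\textbf{Anticipated obstacle.} The subtle point, and the one I would spell out most carefully, is that $\alpha_\Ni$ and $K_\Ni$ are \emph{decision variables} rather than offline-computed constants; one must therefore justify that freezing $(\alpha^\star_\Ni,Y^\star_\Ni,H^\star_\Ni)$ from iteration $k$ yields an admissible choice at iteration $k+1$. Because the LMIs \eqref{eq::C1}–\eqref{eq::C5} involve only these variables (not the open-loop trajectory), freezing them preserves their feasibility; the only trajectory-dependent condition is \eqref{eq::C6}, which is recovered from the invariance statement of Proposition \ref{prop::PI} applied with the frozen $(\alpha^\star_\Ni,K^\star_\Ni)$. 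Once this decoupling is pointed out, the rest of the argument reduces to the classical bookkeeping.
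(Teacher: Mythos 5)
Your proposal is correct and follows essentially the same route as the paper: shift the optimal sequence, append the terminal feedback step, use invariance of the terminal set for feasibility, and use the tail-cost comparison plus the terminal Lyapunov decrease conditions \eqref{eq::LPVD3}--\eqref{eq::LPVD4} for stability. If anything, your version is more careful than the paper's, which glosses over the need to freeze the decision variables $(\alpha_\Ni, Y_\Ni, H_\Ni)$ from the previous iteration and states the cost comparison as an exact equality where the correct statement is the inequality you derive from the nonpositivity of the terminal bracket.
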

\begin{proof}
	Assume that the optimization Problem \eqref{DecentralizedAdaptive} is feasible at time $ t = t_0 $. Then, we obtain a sequence of optimal inputs $ [ u_{i,t_0}, \ldots,  u_{i,t_0+T-1}] $ for all $ i \in \mc M $ which satisfy the state, input and terminal constraints of the problem. Since $ \wt{\mc X}_{\Fi,i}(\alpha_\Ni) $ is a positively invariant region, the sequence of inputs $ [u_{i,t_0+1}, \ldots, u_{i,t_0+T-1}, K_\Ni x_{\Ni,t_0+T}] $, often referred as ``tail'' sequence, can be verified to be a feasible solution for Problem \eqref{DecentralizedAdaptive} at time $ t = t_0 + 1 $. Hence, if the optimization Problem \eqref{DecentralizedAdaptive} has a solution at time $ t_0 $ then it is guaranteed to have a solution at time $ t_0+1 $ establishing this way recursive feasibility. Since any solution to the MPC Problem \eqref{DecentralizedAdaptive} enforces the terminal set to be control invariant, recursive feasibility is preserved even though the terminal sets are adapting on the current state of the system.
	
	To prove stability of Problem \eqref{DecentralizedAdaptive}, define the objective function cost $ J_{t_0} $ at time $ t_0 $ as 
	\begin{equation*}
	J_{t_0} = \sum_{i \in \mc M} \left(\wh{V}_i(x_{i,\Ti}) + \sum_{t = t_0}^{t_0+T-1} \ell_i(x_{\Ni,t},u_{i,t})\right).
	\end{equation*}
	Let now $ J^*_{t_0} $ be the cost at time $ t_0 $ when applying the optimal sequence and $ \wh{J}_{t_0+1} $ be the cost associated with applying the ``tail'' sequences from time $ t_0+1 $. Then, we have that
	\begin{equation*}
	\wh{J}_{t_0+1} - J^*_{t_0} = - \ell(x_{t_0}, u_{t_0}).
	\end{equation*}
	Noting that $ \wh{J}_{t_0+1} \ge {J}^*_{t_0+1} $ due to the suboptimality of the tail sequences gives
	\begin{equation*}
	J^*_{t_0+1} - J^*_{t_0} \le - \ell(x_{t_0}, u_{t_0})
	\end{equation*}
	implying that $ J^* $ is a Lyapunov function for the system; hence, the closed-loop system \eqref{eq::system} resulting from applying the MPC controller defined by Problem \eqref{DecentralizedAdaptive} in a receding horizon fashion is asymptotically stable.
\end{proof}

We close this section by remarking that we distinguish design and online phases for the implementation of the proposed DMPC scheme. During the design phase, the value functions $ \wh{V}_i(\cdot) $ for all $ i \in \mc M $ are calculated, while during the online phase the DMPC Problem \eqref{DecentralizedAdaptive} is solved. Both phases are amendable to distributed computation, e.g., see \cite{Conte2012} for a distributed algorithm to calculated $ \wh{V}_i(\cdot) $ and \cite{Boyd2011} for consensus ADMM as a suitable distributed algorithm to solve the DMPC Problem \eqref{DecentralizedAdaptive}.

\section{Numerical examples}\label{sec::numericals}

In this section, we conduct a number of simulation-based studies to assess the efficacy of the proposed DMPC formulation with adaptive terminal sets. We focus our attention on two examples: $ (i) $ an illustrative two-dimensional system that allow us to assess, numerically and graphically, the benefits of invariant terminal sets that can adapt on the current and predicted states of the system, and $ (ii) $ a series of masses that are connected by springs and dampers which are suitable for studying the scalability and the closed-loop behavior of the proposed methodology.

\subsection{Illustrative example} \label{sec:: P1}
We consider a linear time-invariant system with dynamics
\begin{subequations}\label{eq::ToyExample}
	\begin{equation}
	x_{t+1} = A x_t + B u_t,
	\end{equation}
	where $ x_t \in \mb R^2 $ denote the states and $ u_t \in \mb R^2 $ the inputs. The system matrices $ A \in \mb R^{2 \times 2} $ and $ B \in \mb R^{2\times 2} $ are given as
	\begin{equation}
	A = \begin{bmatrix}
	5 & 0.1 \\
	0.3 & 0.9
	\end{bmatrix} \text{ and } B = \text{diag}([1, 1]),
	\end{equation}
	respectively. The system is subject to linear state and input constraints
	\begin{equation}
	[-5, -5]^\top \le x_t \le [5, 5]^\top \text{ and } [-1, -1]^\top \le u_t \le [1, 1]^\top
	\end{equation}
	and its goal is to minimize the infinite-horizon objective function
	\begin{equation}
	J_\infty  = \sum_{t = 0}^\infty x_t^\top Q x_t + u_t^\top R u_t,
	\end{equation}
\end{subequations}
where $ Q = \text{diag}([1, 1]) $ and $ R = \text{diag}([0.1, 0.1]) $. We split the system into two dynamically coupled subsystems with states $ x_{1,t}, x_{2,t} \in \mb R $ and inputs $ u_{1,t}, u_{2,t} \in \mb R $ such that $ x_t = [x_{1,t}, x_{2,t}]^\top $ and $ u_t = [u_{1,t}, u_{2,t}]^\top $. The dynamics, constraints and objective functions of these subsystems can straightforwardly be constructed through \eqref{eq::ToyExample}. 

We approximate the infinite-horizon objective function as
\begin{equation*}
\widetilde{J}_\infty  = V(x_{\Ti}) + \sum_{t = 0}^{T-1} x_t^\top Q x_t + u_t^\top R u_t.
\end{equation*} 
where $ V(\cdot) $ denotes the value function. In the centralized MPC formulation $ V(\cdot) $ is given as
\begin{equation*}
V(x_\Ti) = x_\Ti^\top P_c x_\Ti
\end{equation*}
with $ P_c \in \mb R^{2 \times 2} $, while in the distributed MPC formulation
\begin{equation*}
V(x_\Ti) = x_{1,\Ti}^\top P_{1,d} x_{1,\Ti} + x_{2,\Ti}^\top P_{2,d} x_{2,\Ti} = x_\Ti^\top P_d x_\Ti
\end{equation*}
where $ P_{1,d}, P_{2,d} \in \mb R $. The matrices $ P_c $ and $ P_d $ are computed by solving LMIs derived by the stability condition in Theorem \ref{thm::ST} and are given as,
\begin{equation*}
P_c = \begin{bmatrix}
3.46 & 0.13 \\
0.13 & 1.25
\end{bmatrix} \text{ and } P_d = \begin{bmatrix}
8.07 & 0 \\
0 & 4.25
\end{bmatrix}.
\end{equation*}
This LMI also provides the controller $ K_c $ while $ K_d $ is evaluated jointly together with the value function and the terminal sets in each iteration using the techniques discussed in Section \ref{sec::design}. Given $ K_c $, the maximum invariant terminal set, $ \mc X_\infty $, is computed using routines developed in MPT 3.0 toolbox \cite{MPT3}, while the ellipsoidal invariant terminal set, $ \mc X_\Fi $, is computed solving a linear optimization problem, described in \cite[\S 5.2]{Boyd1994}. We refer to the MPC Problem \eqref{Centralized} formulated with $ \mc X_\infty $ as ($ C $-Max.) and $ \mc X_\Fi $ as ($ C $-Ellip.). In the following we compare these centralized invariant terminal sets to the proposed decentralized adaptive one, given as $ \wt{\mc X}_f(\alpha_1, \alpha_2) = \{(x_1, x_2) \in \mb R^2: x_{1}^\top P_{1,d} x_{1} \le \alpha_1,\; x_{2}^\top P_{2,d} x_{2} \le \alpha_2 \} $ where $ \alpha_1, \alpha_2 $ are positive scalars computed with the methods presented in Section \ref{sec::design}. We refer to the DMPC Problem \eqref{DecentralizedAdaptive} formulated with $ \wt{\mc X}_\Fi(\alpha_1, \alpha_2) $ as ($ D $-Adap.). 

\begin{figure}[t]
	\centering
	\begin{minipage}{0.49\textwidth}
		\subfigure[]{\includegraphics[width = \textwidth]{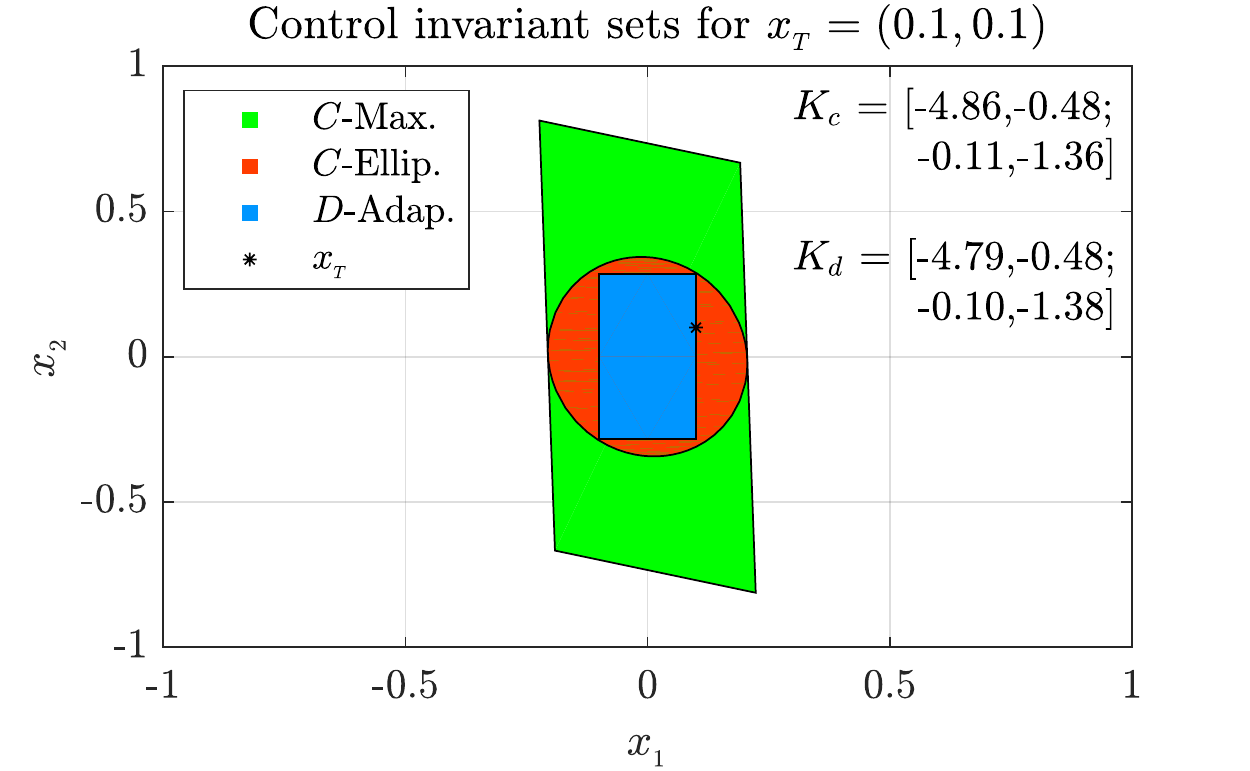}}
	\end{minipage}
	\begin{minipage}{0.49\textwidth}
		\subfigure[]{\includegraphics[width = \textwidth]{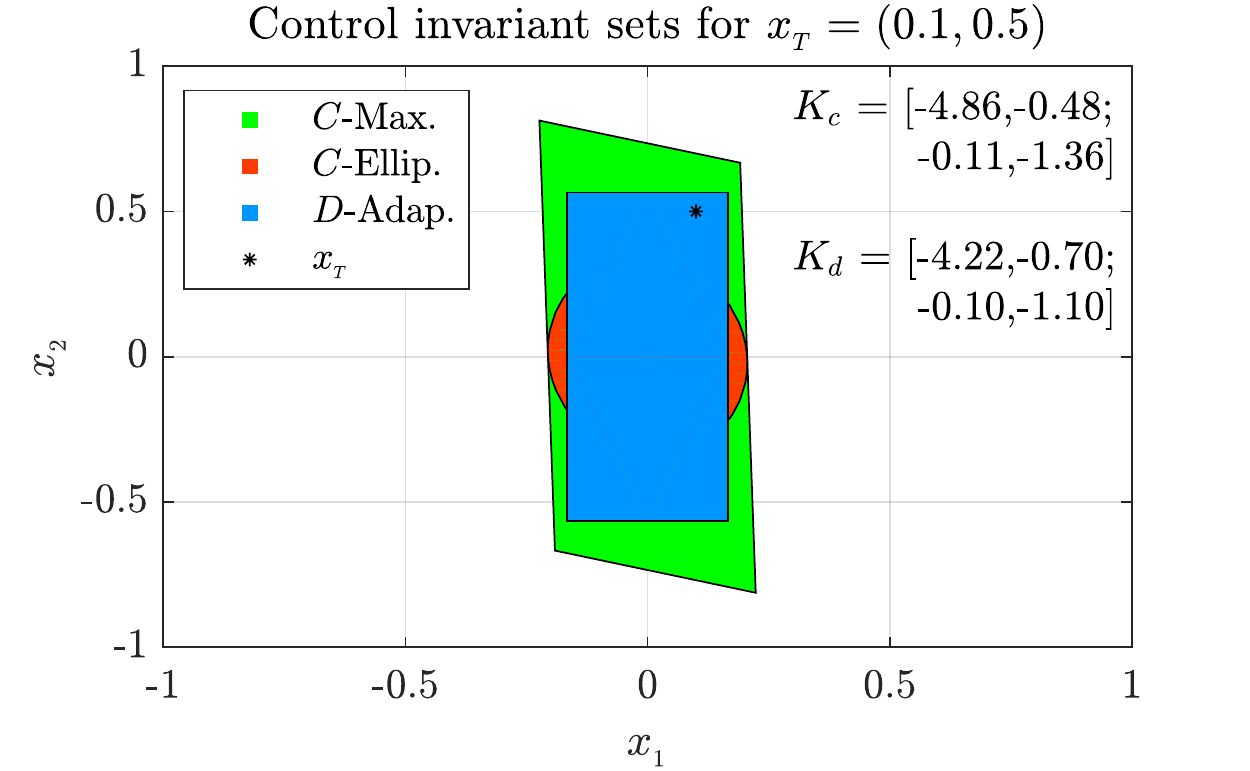}}
	\end{minipage}
	\caption{Shapes of $ \mc X_\infty $ (green), $ \mc X_\Fi $ (red) and $ \wt{\mc X}_\Fi(\alpha_1, \alpha_2) $ (green) for different terminal states $ x_\Ti $.}
	\label{fig::TE_Sets}
\end{figure}
In Fig. \ref{fig::TE_Sets}, the shapes of $ \mc X_\infty $ (green), $ \mc X_\Fi $ (red) and $ \wt{\mc X}_\Fi(\alpha_1, \alpha_2) $ (green) are depicted for different terminal states $ x_\Ti $. Note that if $ x_\Ti $ lies outside the terminal invariant set then the respective MPC problem is infeasible. That being said, this example highlights the ability of $ \wt{\mc X}_\Fi(\alpha_1, \alpha_2) $ to adapt so as to include terminal state $ x_\Ti $ in its interior; hence, appropriately adapting the feasibility domain of Problem ($ D $-Adap.). This adaptation is achieved by adjusting the values of the terminal controller $ K_d $, as it is reported in Fig. \ref{fig::TE_Sets}.

\begin{figure}[t]
	\begin{minipage}{0.33\textwidth}
		\subfigure[]{\includegraphics[width = \textwidth]{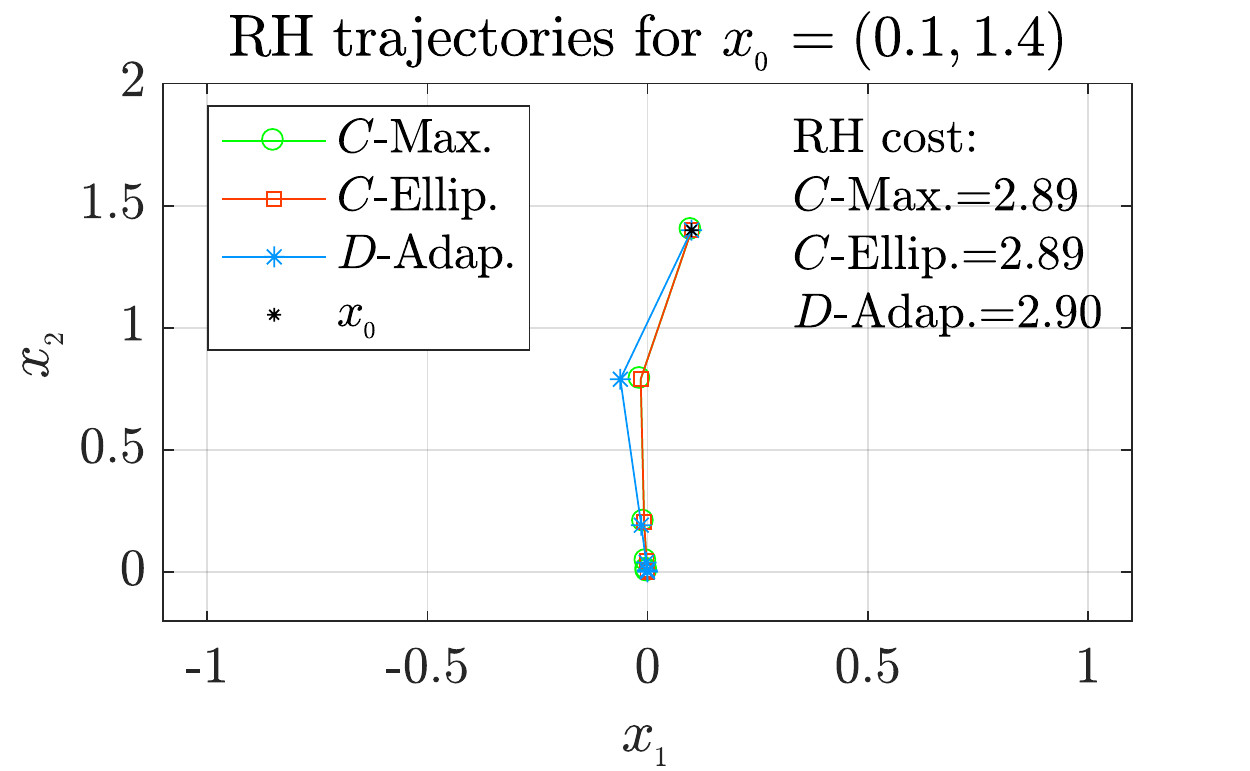}}
	\end{minipage}\hfill
	\begin{minipage}{0.33\textwidth}
		\subfigure[]{\includegraphics[width = \textwidth]{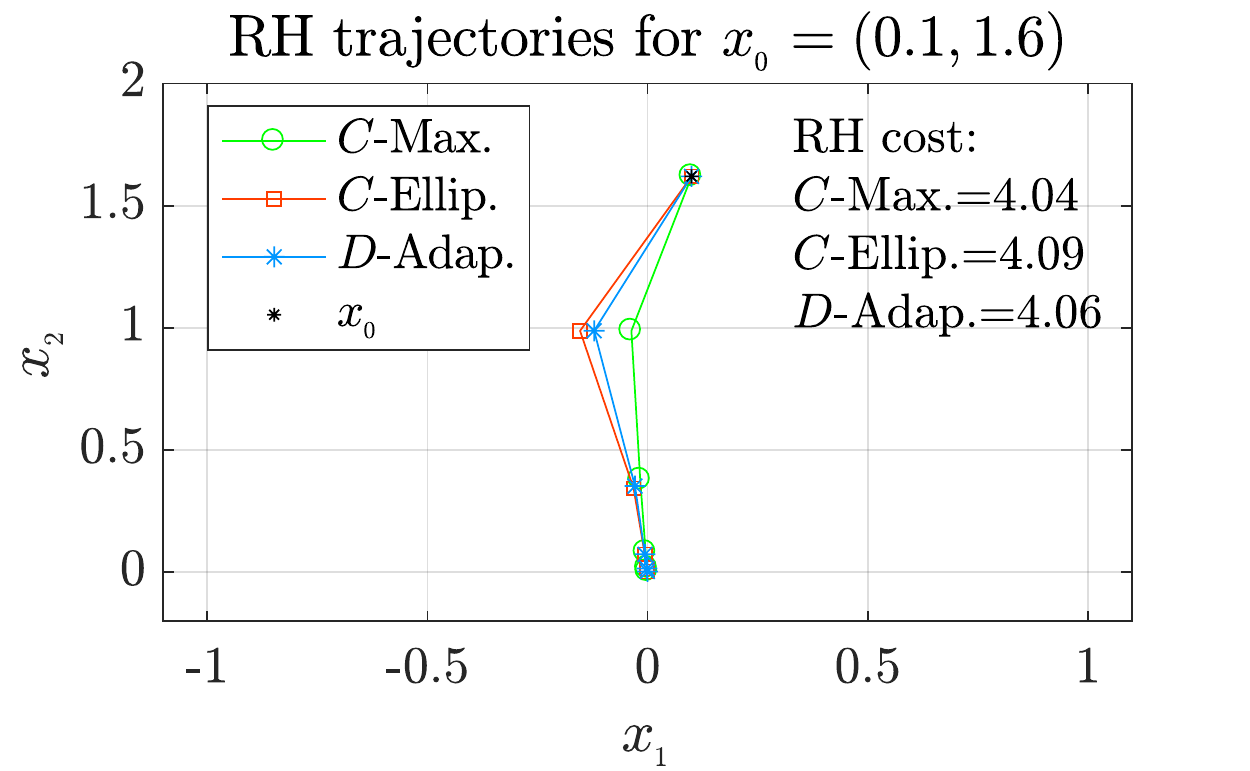}}
	\end{minipage}\hfill
	\begin{minipage}{0.33\textwidth}
		\subfigure[]{\includegraphics[width = \textwidth]{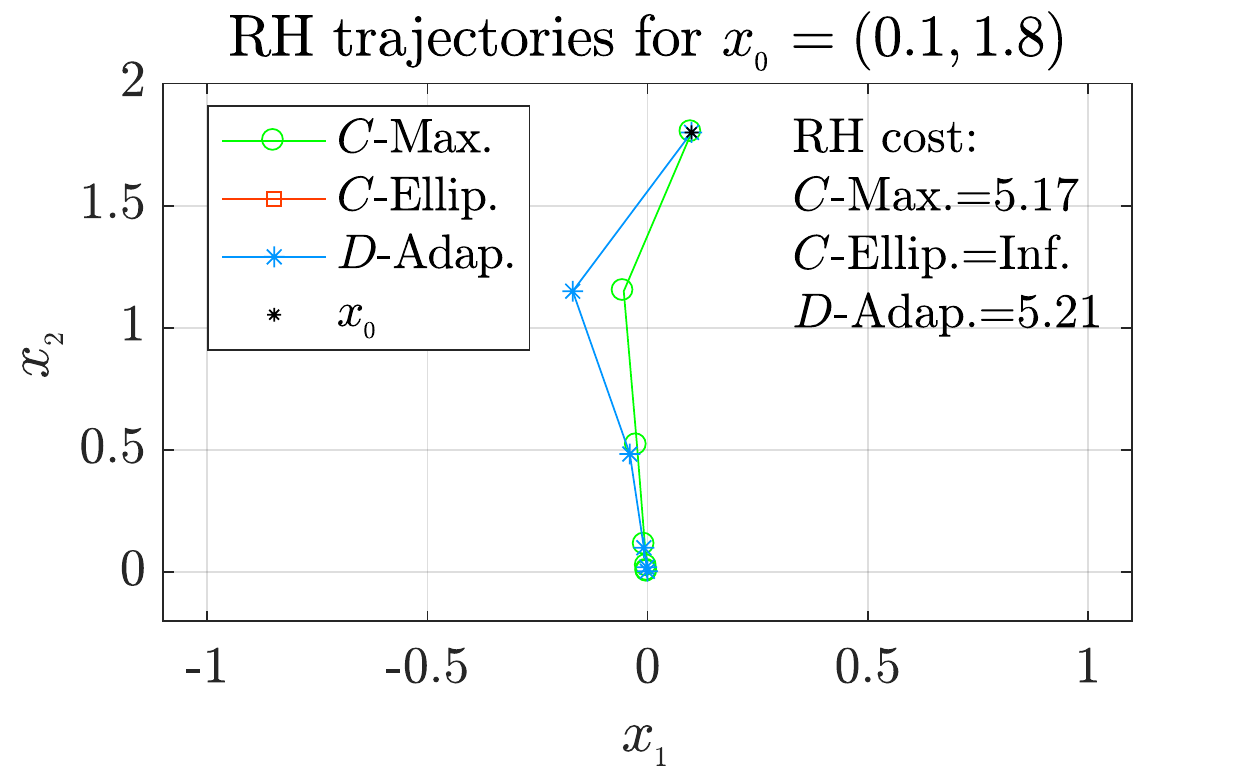}}
	\end{minipage}
	\caption{Closed-loop receding horizon performance of the system for different initial states $ x_0 $ and horizon $ T = 2 $.}
	\label{fig::TE_RH}
\end{figure}
The closed-loop behavior of the system for different initial conditions is now investigated. We choose a time horizon $ T = 2 $ and evaluate the performance of the system on a receding horizon implementation, i.e., the first input resulting from the respective MPC ($ C $-Max.), ($ C $-Ellip.)  and ($ D $-Adap.) optimization problem is applied to the system dynamics, and the next state is evaluated. As a metric the cost of operating the system until convergence to the origin is used. We report this comparison results in Fig. \ref{fig::TE_RH}. It can be observed that if the initial state $ x_0 $ is close to the origin then the MPC Problems ($ C $-Max.) and ($ C $-Ellip.) achieve the exact same cost although their size differs. This is not surprising since $ K_c $ is the optimal controller for the infinite horizon problem, thus, if the terminal state of Problem ($ C $-Max.) in the first iteration is the same to Problem ($ C $-Ellip.), then the two lead to the same solution. However, as the initial state is chosen further away from the origin the effort the system needs to achieve a feasible solution is high for ellipsoidal invariant terminal sets. In this case, the performance of the system under the DMPC controller ($ D $-Adap.) is better from the one achieved by the MPC controller ($ C $-Ellip.). The importance of considering invariant terminal sets that can adapt on the initial system state is highlighted when observing in Fig.~\ref{fig::TE_RH} that the DMPC controller ($ D $-Adap.) is feasible for initial states for which a centralized solution with ellipsoidal invariant terminal sets does not exist. This is attributed to the methods ability to modify the size of its terminal region by appropriately adapting its terminal controller $ K_d $ while satisfying the stability and invariance conditions.

\subsection{Spring-mass-dampers}
\begin{figure}[ht]
	\centering
	\begin{tikzpicture}
[main node/.style={draw,outer sep=0pt,thick}]

\tikzstyle{spring}=[thick,decorate,decoration={zigzag,pre length=0.3cm,post length=0.3cm,segment length=6}]
\tikzstyle{damper}=[thick,decoration={markings,  
  mark connection node=dmp,
  mark=at position 0.5 with 
  {
    \node (dmp) [thick,inner sep=0pt,transform shape,rotate=-90,minimum width=15pt,minimum height=3pt,draw=none] {};
    \draw [thick] ($(dmp.north east)+(2pt,0)$) -- (dmp.south east) -- (dmp.south west) -- ($(dmp.north west)+(2pt,0)$);
    \draw [thick] ($(dmp.north)+(0,-5pt)$) -- ($(dmp.north)+(0,5pt)$);
  }
}, decorate]

\node[main node] (M1) [minimum width=1cm,minimum height=1cm,xshift=-2.5cm] {$m_1$};
\node[main node] (M2) [minimum width=1cm,minimum height=1cm] {$m_2$};
\node[main node] (M3) [minimum width=1cm,minimum height=1cm,xshift=2.5cm] {$m_3$};
\node[main node] (M4) [minimum width=1cm,minimum height=1cm,xshift=5cm] {$m_4$};


\draw [spring] ($(M1.north east)!0.25!(M1.south east)$) -- ($(M2.north west)!0.25!(M2.south west)$);
\draw [damper] ($(M1.north east)!0.75!(M1.south east)$) -- ($(M2.north west)!0.75!(M2.south west)$);

\draw [spring] ($(M2.north east)!0.25!(M2.south east)$) -- ($(M3.north west)!0.25!(M3.south west)$);
\draw [damper] ($(M2.north east)!0.75!(M2.south east)$) -- ($(M3.north west)!0.75!(M3.south west)$);

\draw [spring] ($(M3.north east)!0.25!(M3.south east)$) -- ($(M4.north west)!0.25!(M4.south west)$);
\draw [damper] ($(M3.north east)!0.75!(M3.south east)$) -- ($(M4.north west)!0.75!(M4.south west)$);

\end{tikzpicture}
	\caption{A chain of four masses connected by springs and dampers.}
	\label{fig::Sim}
\end{figure}
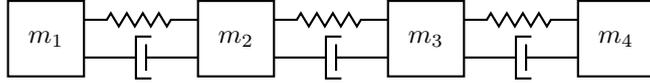
We now consider a series of masses that are connected by springs and dampers and arranged in a chain formation, exemplified in Fig. \ref{fig::Sim}. The values of the masses, spring constants and damping coefficients are chosen uniformly at random from the intervals $ [5,\,10] $kg, $ [0.8,\,1.2] $N/m and $ [0.8,\,1.2] $Ns/m, respectively. We assume that each $ i $-th mass is an individual system with its state vector $ x_{i,t} \in \mb R^2 $ representing the position and velocity deviation from the system's equilibrium state, and its input $ u_{i,t} \in \mb R $ denoting the force applied to the $ i $-th mass. We assume that the states and inputs are constrained as
\begin{equation*}
[-2, -5]^\top \le x_t \le [2, 5]^\top \text{ and } -u_c \le u_t \le u_c,
\end{equation*}
where $ u_c $ is chosen uniformly at random from the interval $ [2,\,4] $N. The masses are initially at rest and positioned uniformly at random within the intervals $ [-2,\,-1.8] $m and $ [1.8,\,2] $m.

The continuous-time dynamics of this interconnected dynamical system naturally admits a distributed structure. The prediction control model is obtained by the discretization of the system's continuous dynamics using forward Euler with the sampling time $ 0.1 $s. Although inexact, Euler discretization is chosen as to preserve the distributed structure of the system. On the contrary, the discrete-time simulation model of the system is obtained using the exact zero-order hold discretization method with the sampling time $ 0.1 $s. The objective function of each system is of the quadratic form \eqref{eq::StC} with $ Q_{\Ni} = \text{diag}(1,1) $ and $ R_i= 0.1 $. We refer to the MPC Problem \eqref{Centralized} formulated with $ \mc X_\infty $ as ($ C $-Max.) and $ \mc X_\Fi $ as ($ C $-Ellip.). We refer to the proposed DMPC Problem \eqref{DecentralizedAdaptive} formulated with $ \wt{\mc X}_{\Fi}(\alpha)  $ as ($ D $-Adap.).

\begin{figure}[t]
	\centering
	\begin{minipage}{0.49\textwidth}
		\subfigure[]{\includegraphics[width = \textwidth]{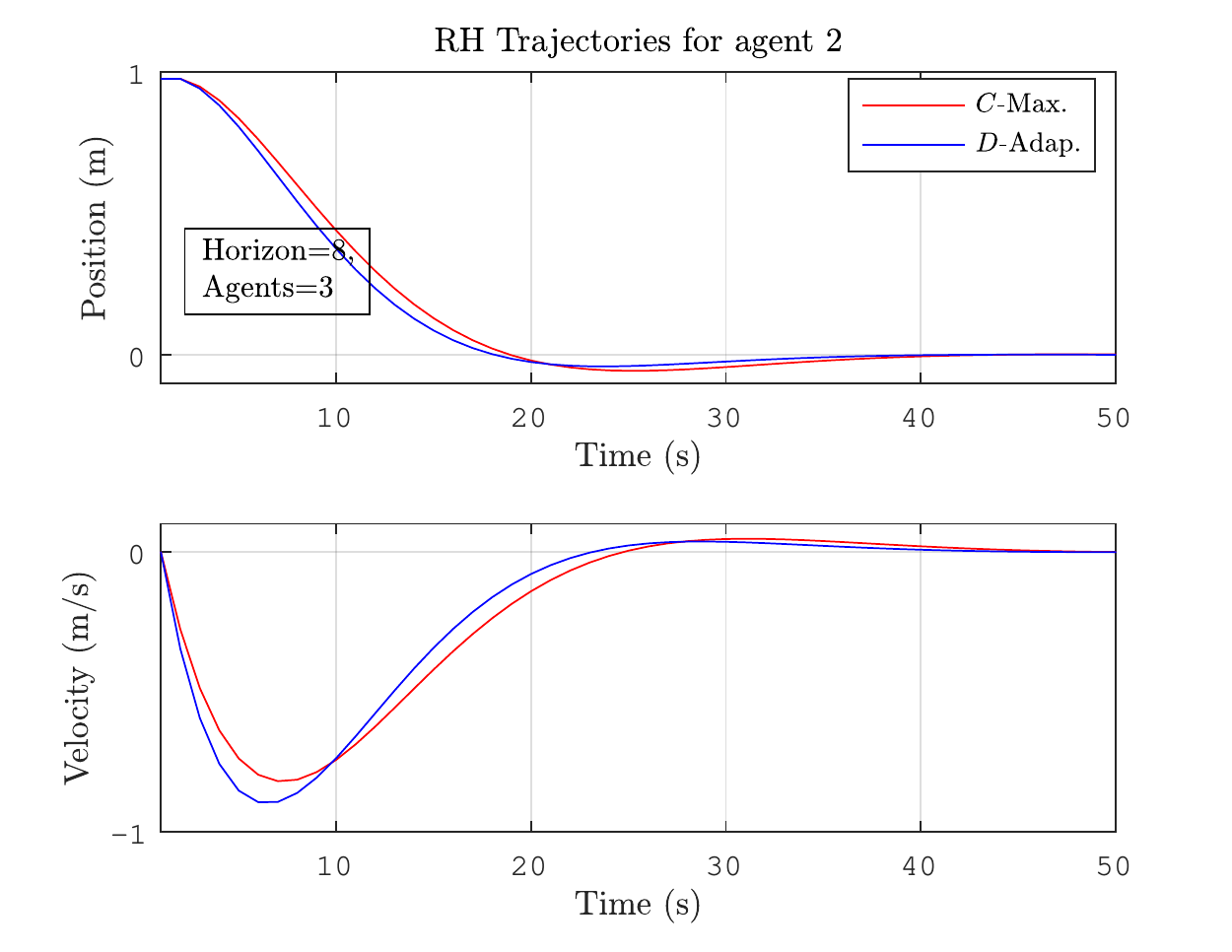} \label{fig::Trajecta}}
	\end{minipage}
	\begin{minipage}{0.49\textwidth}
		\subfigure[]{\includegraphics[width = \textwidth]{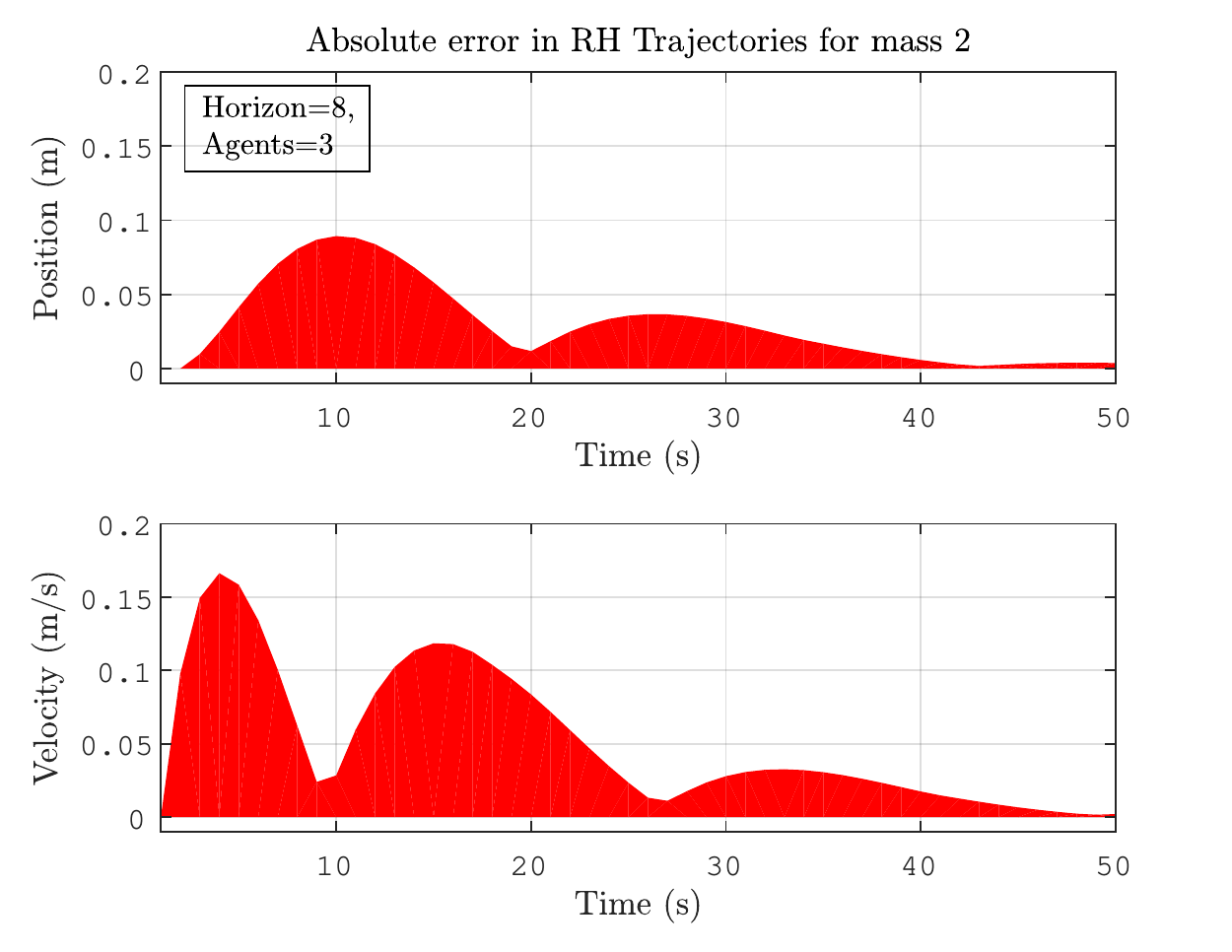} \label{fig::Trajectb}}
	\end{minipage}
	\caption{(a) Receding horizon (RH) position and velocity trajectories for centralized and decentralized methods and (b) absolute error of trajectories in time over 100 randomly generated and initialized systems.}
\end{figure}
The performance of the system is evaluated on a receding horizon implementation. We use as a metric the cost of operating the system until convergence to the system's equilibrium state. Initially, we conduct a closed-loop simulation experiment for a system comprising three masses and a prediction horizon of $ T = 8 $.  In Fig. \ref{fig::Trajecta}, the trajectories generated for the MPC ($ C $-Max.) and DMPC ($ D $-Adap.) problems are shown. We observe that these trajectories are very similar, which illustrates the proximity in performance between the centralized and distributed designs. To better quantify the error between these approaches over time, we repeated these simulations for 100 randomly generated systems of the same dimension and we report the results in \ref{fig::Trajectb}. We observe that the error remains bounded over time and eventually decreases to zero as the system converges to the origin. This verifies the proximity in performance between the proposed DMPC ($ D $-Adap.) and the well-established centralized MPC ($ C $-Max.) approach.

\begin{figure}[t]
	\centering
	\includegraphics[width = 0.5\textwidth]{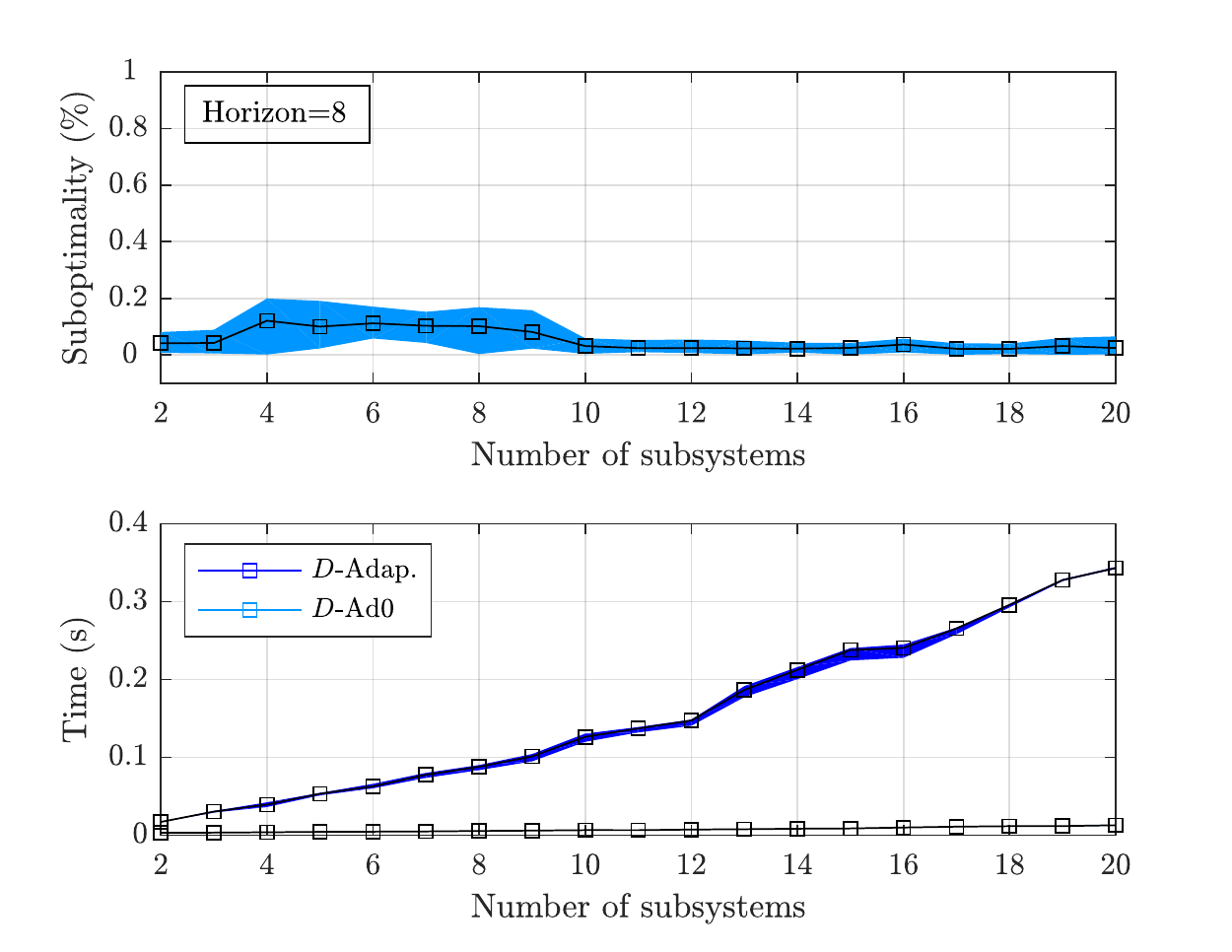}
	\caption{Comparison of decentralized adaptive methods where $ D $-Ad0 only assumes adaptation of the terminal sets once at time $ t=0 $.}
	\label{fig::SMD_AdapComp}
\end{figure}
However, the proposed approach relies on the adaptation of the invariant terminal sets in each receding horizon simulation which involves the formulation and solution of a semi-definite program. To avoid the computational burden associated with it, we compare the proposed fully adaptive method ($ D $-Adap.) with its simplification ($ D $-Ad0) in which the adaptation of the invariant terminal sets is only performed once at time $ t = 0 $ to account for the effect of the initial state of the system. Then, we enforce these computed terminal sets for the rest of the receding horizon simulations. This way we resort to solving a quadratically constraints quadratic program instead of a semidefinite one. These two approaches are compared as the number of masses in the system increases where for each topology we generate 100 random system instances. We use as metrics for this comparison the mean solution time and the cost of the receding horizon simulations until convergence to the origin is achieved. The results are reported in Fig. \ref{fig::SMD_AdapComp}. It is observed that adapting the invariant terminal sets in every iteration of the receding horizon simulation provides a slightly better solution quality with respect the case where the adaptation is only performed once at time $ t = 0 $. This can be attributed to the dissipative nature of the spring-mass-damper system for which the initial displacement is the determining factor for the shape of the invariant terminal sets. On the other hand, the computational benefit occurring when using the $ D $-Ad0 method is considerable since the simplified approach only requires a fraction of the time to generate the solution of the problem.

\begin{figure}[t]
	\begin{minipage}{0.5\textwidth}
		\subfigure[]{\includegraphics[width = \textwidth]{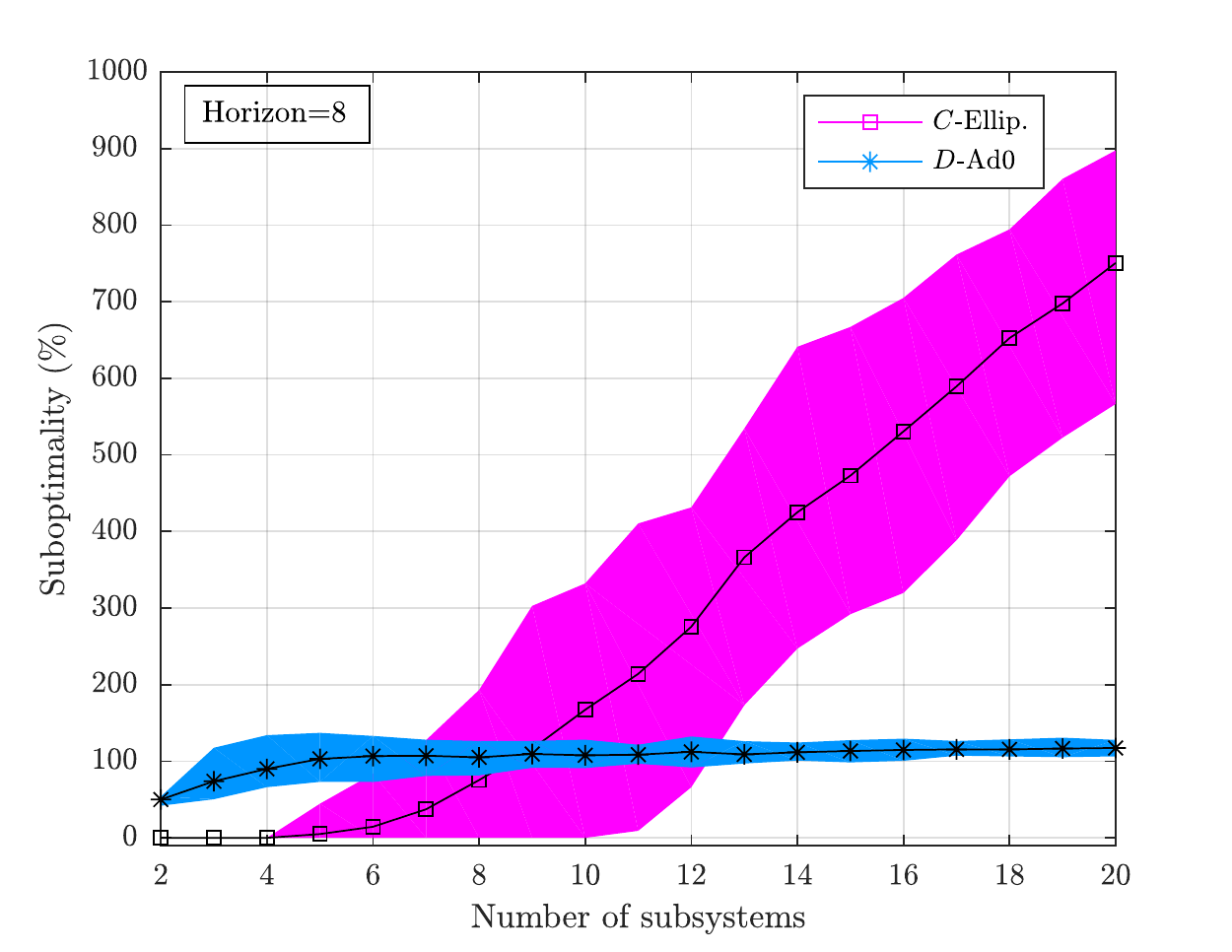}\label{fig::SMD_MehtodsCompa}}
	\end{minipage}\hfill
	\begin{minipage}{0.5\textwidth}
		\subfigure[]{\includegraphics[width = \textwidth]{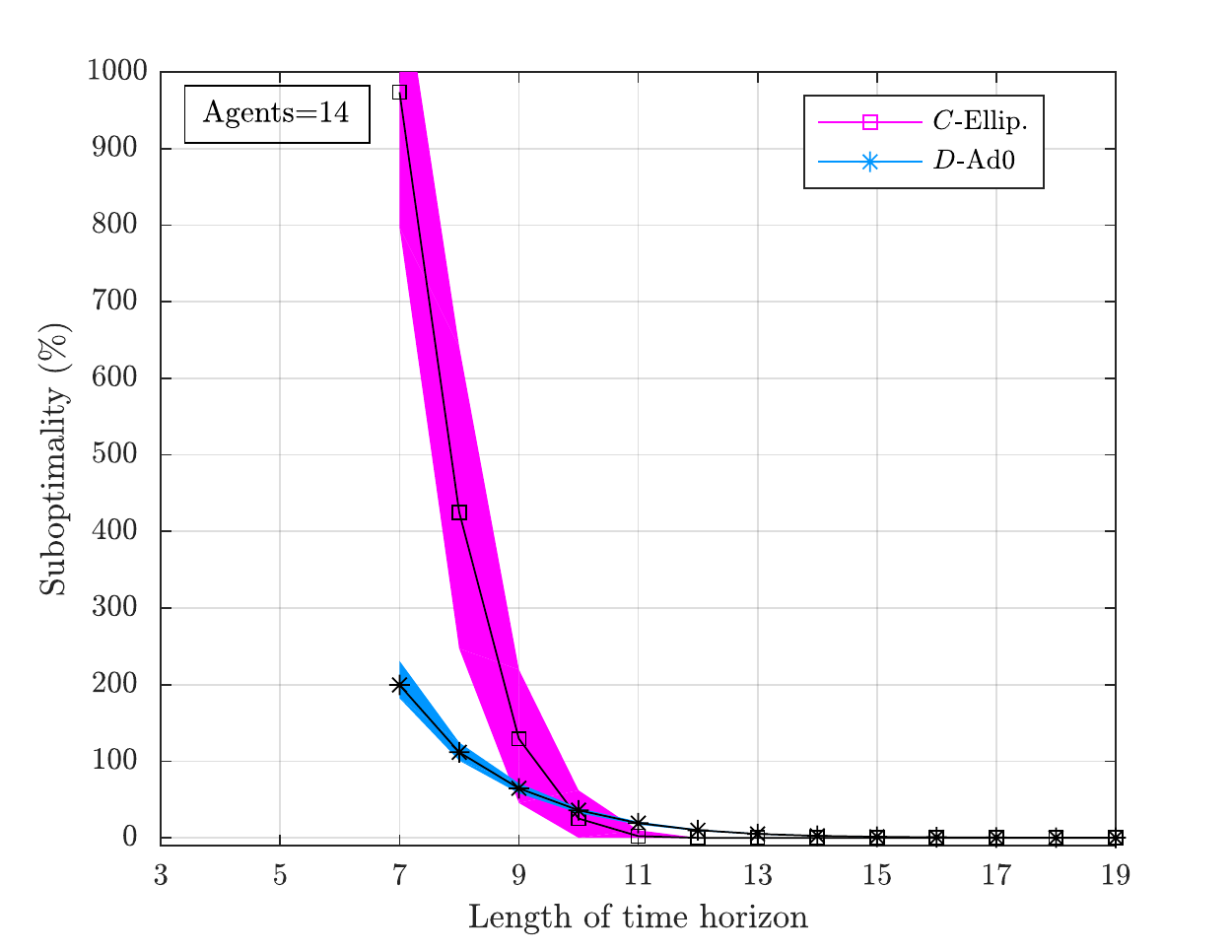}\label{fig::SMD_MehtodsCompb}}
	\end{minipage}
	\caption{Suboptimality of centralized MPC method with ellipsoidal invariant terminal sets and distributed MPC method with adaptive invariant terminal sets as (a) the number of masses in the system increases and (b) the time horizon of the MPC formulation increases. }
	
\end{figure}
To better quantify the performance comparison between the proposed adaptive DMPC approach and centralized MPC designs, we conducted several simulation experiments for systems with different horizons and number of masses. The comparison is performed on the suboptimality of the respective methods during receding horizon simulations using as basis the cost associated with the centralized MPC Problem ($ C $-Max.). Since we use the $ D $-Ad0 method as the proposed methodology for comparison, the solution times are comparable and are not reported. We remark though that the proposed decentralized method achieves a faster solution time due to their distributed structure which can potentially be further exploited by dedicated distributed computation algorithms. Fig \ref{fig::SMD_MehtodsCompa} shows the cost associated as the number of subsystems in the system increases, where the simulation horizon is kept constant as $ T = 8 $. We observe that the proposed method considerably outperforms even the centralized approach with ellipsoidal terminal sets as the number of subsystems increases. This is attributed to the proposed method's ability to adapt on the initial condition. As expected, the suboptimality gap increases with respect to the number of masses in the system. We note, however, that in all instances this suboptimality gap is fairly small, which indicates the efficiency of the proposed distributed design method. Finally, Fig. \ref{fig::SMD_MehtodsCompb} shows the cost associated with the length of prediction horizon for a system comprising five masses. We observe that the increase of the horizon length results in cost convergence for the compared methods. Notably as the horizon increases the centralized methods outperform the proposed decentralized one since large horizons make the use of terminal sets and value functions obsolete with the system being capable of steering its states close to the equilibrium state within the considered prediction horizon time.

\section{Conclusion} \label{sec::conclusion}
In this paper, we presented a design approach for distributed cooperative MPC that encapsulates the design of the distributed terminal controller, value function and invariant set in the MPC formulation. Conditions for Lyapunov stability and invariance are imposed in the design problem in a way that allows the value function and terminal invariant set to admit the desired distributed structure. This allows the resulting distributed MPC problem to be amendable to distributed computation algorithms. The proposed distributed MPC method couples the design of the terminal stabilizing controllers and invariant terminal sets with the current and predicted states of the system. The merits of considering adaptive invariant terminal sets is illustrated in a large-scale system that is composed of masses connected by springs and dampers. The closed-loop performance of the proposed distributed MPC approach is shown to outperform even the centralized MPC problem formulated with the ellipsoidal invariant terminal set for short prediction horizons.

Future work involves the extension of the proposed methodology to plug-and-play applications where the conditions for Lyapunov stability and terminal set invariance need to be evaluated in a completely distributed way. An important feature to be exploited is that only the new and a few of the existing distributed controllers may need to be redesigned.

\bibliographystyle{unsrt}
\bibliography{darivianos_abrv,Papers}

\end{document}